\documentclass[11pt]{article}
\textwidth=6.50in \textheight=9.25in \topmargin -0.3in
\parskip 6pt
\oddsidemargin=0.1in \evensidemargin=0.1in

\usepackage{amsmath}
\usepackage{amssymb}
\usepackage{amsthm}
\usepackage{comment}
\usepackage[morefloats=200]{morefloats}
\usepackage{enumitem}
\setlist{nosep}

\newcommand{\Z}{{\mathbb Z}}
\maxdeadcycles=200

\newtheorem{theorem}{Theorem}[section]

\newtheorem{lemma}[theorem]{Lemma}

\newtheorem{construction}[theorem]{Construction}

\begin{document}
\baselineskip 18pt
\title{Group divisible designs with block size $4$ and group sizes $2$ and $5$}

{\small
\author
{R. Julian R. Abel \\
 School of Mathematics and Statistics\\
  UNSW Sydney\\ NSW 2052, Australia\\
 \texttt{r.j.abel@unsw.edu.au }
 \and
   Thomas Britz  \\
  School of Mathematics and Statistics\\
   UNSW Sydney\\ NSW 2052, Australia\\
  \texttt{britz@unsw.edu.au  }
 \and
   Yudhistira A.  Bunjamin  \\
  School of Mathematics and Statistics\\
   UNSW Sydney\\ NSW 2052, Australia\\
  \texttt{yudhi@unsw.edu.au  }
  \and
  Diana Combe  \\
  School of Mathematics and Statistics\\
   UNSW Sydney\\ NSW 2052, Australia\\
  \texttt{diana@unsw.edu.au }
}    
}    

\date{}

\maketitle 

\noindent{\bf Abstract:} 
In this paper we provide a $4$-GDD of type $2^2 5^5$, thereby solving the existence question for the last remaining feasible type for a $4$-GDD with no more than $30$ points.
We then show that $4$-GDDs of type $2^t 5^s$ exist for all but a finite specified set of feasible pairs $(t,s)$.


\noindent{\bf Keywords:} group divisible design (GDD), feasible group type.

\noindent{\bf Mathematics Subject Classification:}  05B05

\section{Introduction}





A {\it group divisible design} (GDD) is a block design in which the point set, or set of treatments, is partitioned into parts. The parts are traditionally  called {\it groups}. In an experimental context, the points in the same group  might be treatments that are similar in some way, or different intensities of the same type of treatment. For convenience we denote  the point set by $X$ and the partition into groups by $\mathcal{G}$. 

Let $\mathcal{B}$ be a collection of $k$-element subsets of $X$  called {\it blocks}. Then the triple $(X, \mathcal{G}, \mathcal{B})$  is called  a   {\it group divisible design} with block size $k$, or $k$-GDD, if ($1$) no block intersects any group more than once and ($2$) any two points from distinct  groups appear together in exactly one block. If all blocks of a GDD have the same size, $k$, then we call the design a $k$-GDD.

The {\it group type} (or {\it type}) of a $k$-GDD is the multiset  \{$|G|: G \in \mathcal{G}$\}.  
Often we use `exponential' notation for group types rather than set notation. 
In exponential notation, the type $t_1^{u_1}  t_2^{u_2} \ldots t_m^{u_m}$ means
there are $u_i$ groups of size $t_i$ for $i=1,2, \dots, m$. 

In this paper we are particularly interested in group divisible designs with block size $4$. 

Various authors have looked at existence of various infinite families of $4$-GDDs. For example, $4$-GDDs of types $g^p$ and $g^p n^1$ have been extensively studied; 
see for instance~\cite{Forbes1}, \cite{Forbes2}, \cite{Forbes3}, \cite{gerees}, \cite{gezhu}, \cite{reesk=45} and \cite{gegum}. 
Also, Abel et al. \cite{5423}, \cite{ABC.50less}, \cite{ABBC.2t8s} have determined the existence of $4$-GDDs of types $3^t 6^s$, $3^t 9^s$ and $2^t 8^s$ for all but a small finite number of specified cases.

\subsection{Necessary conditions}

There are known necessary conditions for existence of a $4$-GDD of type $\{g_1, g_2, \ldots, g_m\}$.
These are given in Theorem~\ref{necessary}.  

\begin{theorem}{\rm\cite{5423, ABC.50less, krestin, reesk=45}} \label{necessary}
Suppose that there exists a $4$-GDD of type $\{g_1, g_2, \ldots, g_m\}$ and $g_1 \geq g_2 \geq \cdots \geq g_m > 0$.
Set $v= \sum_{i=1}^m g_i$. 
Then
 \begin{enumerate}
  \item $m \geq 4;$ 
  \item $v -g_i  \equiv 0\pmod{3}$ for $i=1,2, \ldots, m;$ 
  \item $\sum_{i=1}^m g_i(v-g_i)  \equiv 0\pmod{4};$ 
  \item $3g_i + g_j \leq v$ for all $i,j \in \{1,2, \ldots, m\}$, $i \neq j;$ 
  \item if $m=4$, then $g_i = g_j$ for all $i,j \in \{1,2, \ldots, m\};$
  \item if $m=5$, then the group type is of the form $h^4 n^1$ where $n \leq 3h/2;$ 
  \item if $3g_1+g_2=v$  and   $g_1  >  g_2$, then $g_3 \leq 2g_1/3;$ 
  \item if the group type is of the form  $h_1^1$ $h_2^x$ $h_3$ $h_4  \ldots h_n$ 
        where $3h_1 + h_2 = v$ and $h_2 > h_3 \geq h_4 \geq \cdots \geq h_n >0$, 
        then $n \geq 6$. 
        If further $n=6$, 
        then $h_i (h_2-h_i)  = h_j(h_2 - h_j)$ for all $i,j \in \{3,4,5,6\}$. 

 \end{enumerate}
\end{theorem}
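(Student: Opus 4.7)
The plan is to prove the eight conditions in order, using progressively more refined double-counting arguments on points and blocks.

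For (1)--(4), elementary counting suffices. (1) is immediate from the block size. For (2), I would fix $x \in G_i$: every other point lies with $x$ in a unique block, which contributes three other points paired with $x$, so the blocks through $x$ number $(v-g_i)/3$, which must be an integer. For (3), the total number of blocks satisfies $b = \frac{1}{6}\sum_{i<j}g_ig_j = \frac{1}{12}\sum_i g_i(v-g_i)$, and integrality combined with (2) gives the required congruence. For (4), I would fix $x \in G_j$; for each $y \in G_i$, the unique block through $x,y$ contains two additional points in $V\setminus(G_i\cup G_j)$, and these $2g_i$ points must be pairwise distinct (else some point would lie with $x$ in two blocks).

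For (5) with $m=4$, each block meets every group exactly once; fixing $x\in G_1$, the blocks through $x$ are in bijection with each $G_j$ for $j\ne 1$, forcing $g_2 = g_3 = g_4$, and the symmetric argument at $y\in G_2$ gives $g_1=g_3$. For (6) with $m=5$, I would fix $x\in G_i$ and let $a_k$ denote the number of blocks through $x$ that miss $G_k$. Since each block through $x$ misses exactly one of the four other groups, the system $\sum_{k\ne i,j}a_k = g_j$ ($j\ne i$) has the unique solution $a_k = (v-g_i)/3 - g_k$. Summing over $x\in G_i$ gives the total count $b_k = g_i\bigl((v-g_i)/3 - g_k\bigr)$, independent of the choice of $i\ne k$. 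Equating two such expressions and factoring yields $(g_i-g_j)(v-3g_k-g_i-g_j)=0$, so the four sizes outside $G_k$ take at most two distinct values with sum $v-3g_k$. A short case analysis then rules out all types except $h^4n^1$, and $n\le 3h/2$ follows from (4) when $n>h$ and is vacuous otherwise.

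For (7), the hypothesis $3g_1+g_2=v$ is exactly equality in (4) with $(i,j)=(1,2)$; tracing the proof of (4), equality forces every block through each $x\in G_2$ to contain a $G_1$ point, so the $g_1$ blocks through $x$ partition $V\setminus G_2$ into $g_1$ triples, each consisting of one $G_1$ point together with two points from $\bigcup_{i\ge 3}G_i$ lying in distinct groups. To extract $g_3\le 2g_1/3$ I plan to double count in this derived structure: for each $z\in G_3$ there are exactly $g_1-g_2$ blocks through $z$ that contain a $G_1$ point but no $G_2$ point, each contributing two further points in $V\setminus(G_1\cup G_2\cup G_3)$, and combining with the resolvable partition induced by the $G_2$ points should yield the required $3$-to-$2$ ratio. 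For (8), equality in (4) for the type $h_1^1 h_2^x h_3 h_4 \cdots h_n$ pins down the block structure through the $h_2$-groups sufficiently to force $n\ge 6$ by a parameter count, and when $n=6$ the analogue of the factorisation from (6) produces $h_i(h_2-h_i) = h_j(h_2-h_j)$ on the tail $\{3,4,5,6\}$.

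The hardest step will be the bound in (7): while the equality case of (4) supplies a rigid pairing and partition structure on $V\setminus(G_1\cup G_2)$, translating it into $g_3\le 2g_1/3$ requires the correct invariant, since a naive reapplication of (4) inside the derived structure only yields $g_3\le 2g_1-g_2$, which is strictly weaker in the regime $g_2>2g_1/3$ where (7) has bite. I expect the resolution to exploit that the $g_2$ different partitions of $V\setminus(G_1\cup G_2)$ induced by the points of $G_2$ must be pairwise edge-disjoint as matchings, combined with a convexity or Cauchy--Schwarz type estimate on how often a given $G_3$ point can be matched across these $g_2$ partitions.
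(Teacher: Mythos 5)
First, a point of reference: the paper does not actually prove Theorem~\ref{necessary}; it quotes it from \cite{5423, ABC.50less, krestin, reesk=45}, and the only argument in the text is the remark that the point--block count $\sum_i g_i(v-g_i)\equiv 0\pmod{12}$ reduces to condition~3 in the presence of condition~2. Your arguments for conditions 1--6 are correct and are the standard ones: the replication number $(v-g_i)/3$, the block count $b=\tfrac{1}{12}\sum_i g_i(v-g_i)$, the ``$2g_i$ distinct third-and-fourth points'' argument for condition~4, and for $m=5$ the identity $b_k=g_i\bigl((v-g_i)/3-g_k\bigr)$ leading to $(g_i-g_j)(v-g_i-g_j-3g_k)=0$ and the $h^4n^1$ case analysis.

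The genuine gap is condition~7, and you flag it yourself: the route you propose (pairwise edge-disjoint matchings on $W=V\setminus(G_1\cup G_2)$ indexed by $G_2$, plus ``a convexity or Cauchy--Schwarz type estimate'') is not the right invariant, and nothing in your sketch indicates it would close. The bound follows instead from a one-line double count using ingredients you already wrote down. Call a block \emph{special} if it contains exactly one point of $G_1$, no point of $G_2$, and hence three points of $W$. Counting from $G_1$: each $y\in G_1$ lies in $(v-g_1)/3=(2g_1+g_2)/3$ blocks, of which exactly $g_2$ meet $G_2$, so the number of special blocks is $g_1\cdot(2g_1-2g_2)/3=2g_1(g_1-g_2)/3$. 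Counting from $G_3$: you correctly showed that each $z\in G_3$ lies in exactly $g_1-g_2$ special blocks, and a special block contains at most one point of $G_3$, so $g_3(g_1-g_2)\le 2g_1(g_1-g_2)/3$; dividing by $g_1-g_2>0$ gives $g_3\le 2g_1/3$. Condition~8 as you sketch it (``a parameter count'', ``the analogue of the factorisation from (6)'') is likewise not yet an argument; what is needed is the observation that under $3h_1+h_2=v$ every block meeting an $h_2$-group meets $G_1$, hence every block avoiding $G_1$ avoids all $h_2$-groups and lies entirely in $U=H_3\cup\cdots\cup H_n$; each $z\in H_i$ lies in exactly $(h_2-h_i)/3>0$ such blocks, each of which requires four distinct groups inside $U$, forcing $n\ge 6$; and when $n=6$ the number of blocks inside $U$ equals $h_i(h_2-h_i)/3$ for every $i\in\{3,4,5,6\}$, which is the stated identity.
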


Counting point-block pairs gives $\sum_{i=1}^m g_i(v-g_i) \equiv 0 \pmod{12}$. 
Combined  with $v-g_i \equiv 0\pmod{3}$, 
this usefully simplifies to Condition 3 in Theorem~\ref{necessary}. 
Conditions 7 and 8 in Theorem~\ref{necessary} come from  \cite{ABC.50less}.

The necessary conditions in Theorem~\ref{necessary} are not sufficient conditions. 
We say that a multiset $\{g_1, g_2, \ldots, g_m\}$ of positive integers is a {\it feasible} group type for a $4$-GDD if it satisfies the conditions of Theorem~\ref{necessary}. 

In~\cite{krestin}, Kreher and Stinson tabulated the feasible types for $4$-GDDs with no more than $30$ points and answered the existence question for all but $3$ of the feasible types.  In~\cite{5423}, Abel et al. constructed designs for $2$ of the remaining unknown cases, thus completing the proof of  Theorem~\ref{gddlt30}.

\begin{theorem} {\rm\cite {5423, krestin}} \label{gddlt30}
The only feasible group types for a $4$-GDD on at most $30$ points are 
$1^4$,~$2^4$, $3^4$, $1^{13}$, $1^{9}4^1$, $2^7$, $3^5$,
$1^{16}$, $1^{12} 4^1$, $1^{8} 4^2$, $1^{4} 4^3$, $4^4$, $2^6 5^1$, $2^{10}$, $5^4$, $3^5 6^1$,
$1^{15} 7^1$, $2^9 5^1$, $3^8$, $3^4 6^2$, $6^4$, $1^{25}$, $1^{21} 4^1$, $1^{17} 4^2$, $1^{13}4^3$,
$1^9 4^4$, $1^5 4^5$, $1^1 4^6$, $2^{13}$, $2^3 5^4$, $2^9 8^1$, $3^9$, $3^5 6^2$, $3^1 6^4$,
$1^{28}$, $1^{24} 4^1$, $1^{20} 4^2$, $1^{16}4^3$, $1^{12} 4^4$,
$1^8 4^5$, $1^4 4^6$, $4^7$, $1^{14} 7^2$, $1^{10} 4^1 7^2$, $1^6 4^2 7^2$,
$1^2 4^3 7^2$, $7^4$, $2^{12} 5^1$, $2^2 5^5$, $2^8 5^1 8^1$, $3^8 6^1$, $3^4 6^3$, $6^5$, $3^7 9^1$.  
A~$4$-GDD exists for all these types except for types $2^4$, $2^6 5^1$, $6^4$ and possibly for type $2^2 5^5$.
\end{theorem}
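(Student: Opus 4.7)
My plan is to establish the theorem by (i) mechanically enumerating all multisets $\{g_1,\ldots,g_m\}$ with $\sum g_i \le 30$ that satisfy every condition of Theorem~\ref{necessary}, and then (ii) resolving existence or non-existence for each resulting feasible type individually. Step (i) is a finite (and easily programmable) search: fix $v \le 30$, loop over partitions of $v$ in decreasing order, and discard any whose largest two parts violate $3g_1+g_2 \le v$, whose parts do not all satisfy $v-g_i\equiv 0\pmod 3$, or which fail Conditions~3,~5--8. The result is precisely the list tabulated in the theorem, so (i) is routine and not the real work.

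The main task is step (ii). For each feasible type I would try to exhibit an explicit $4$-GDD, and only fall back on a non-existence argument in the few cases where no design can exist. A large block of the list can be disposed of uniformly: types of the form $n^4$ are equivalent to the existence of a transversal design $\mathrm{TD}(4,n)$, hence to a pair of mutually orthogonal Latin squares of order $n$, which exist for every $n\notin\{2,6\}$. This instantly supplies $1^4,3^4,4^4,5^4,7^4$ and kills $2^4$ and $6^4$. Types of the form $1^{v-1}$ and $1^{v-4k}4^k$ come from truncating or filling groups of a resolvable design (equivalently, from a Steiner system $S(2,4,v)$ or the Brouwer/Rees $4$-GDDs of types $3^t$, $3^t 6^s$ etc.\ already invoked in the paper). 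Types $2^t$, $3^t$, $6^t$, and $h^4 n^1$ are handled by the small-$v$ portions of the infinite families cited in the introduction (\cite{Forbes1,Forbes2,Forbes3,gerees,gezhu,reesk=45,gegum,5423,ABC.50less,ABBC.2t8s}). The residual stubborn types on $\le 30$ points---$1^9 4^1,\,2^6 5^1,\,1^{15}7^1,\,2^9 5^1,\,\ldots,\,2^{12}5^1,\,2^8 5^1 8^1,\,3^7 9^1$ and similar---can each be built by ad~hoc cyclic or group-developed constructions (base blocks over $\mathbb{Z}_v$ or over a direct product containing the groups as cosets), or by Wilson's fundamental construction applied to a master $4$-GDD with weights in $\{0,1,2,\ldots\}$. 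In practice I would work through the list in order of increasing $v$, peeling off each type either by direct construction or by noting it is the filled-in truncation of a smaller design already handled.

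That leaves the non-existence side beyond $2^4$ and $6^4$. The type $2^6 5^1$ has $v=17$, $r_i=(v-g_i)/3$ replications, and the required block count is $b=\sum g_i(v-g_i)/12 = (6\cdot 2\cdot 15+5\cdot 12)/12 = 20$; a short counting argument shows that the five points in the group of size~$5$ collectively force a block configuration inconsistent with each pair of $2$-groups being covered exactly once, so no such $4$-GDD exists. The hardest single type is $2^2 5^5$: it passes every necessary condition and admits no obvious small-case obstruction, so the honest statement is that existence is open---this is exactly the ``possibly'' clause in the theorem and is the gap the rest of the paper is designed to close.

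The step I expect to be most delicate is verifying that my short list of construction templates (transversal designs, filled resolvable designs, cyclic base-block constructions, Wilson-type weightings) actually covers every feasible type except the four listed. A single overlooked type would invalidate the theorem, so I would cross-check the enumeration in step~(i) against the construction log in step~(ii) entry by entry, which is precisely the bookkeeping that makes the $2^2 5^5$ case stand out as genuinely different and motivates the remainder of the paper.
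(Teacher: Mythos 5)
This theorem is not proved in the paper at all: it is imported wholesale from the literature, with the enumeration of feasible types and almost all of the existence/non-existence decisions attributed to Kreher and Stinson \cite{krestin} and the last two resolved constructions to \cite{5423}. Your proposal instead undertakes to re-derive everything, and as a plan its architecture (enumerate via Theorem~\ref{necessary}, then settle each type) is sound; the $n^4 \leftrightarrow \mathrm{TD}(4,n)$ reduction correctly disposes of $1^4,3^4,4^4,5^4,7^4$ and of $2^4,6^4$. But two genuine gaps prevent it from being a proof. First, the non-existence of a $4$-GDD of type $2^6 5^1$ is asserted, not shown: ``a short counting argument shows that the five points \ldots force a block configuration inconsistent with \ldots'' is a placeholder. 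The actual argument is not short in the sense you suggest: one counts that the $5\cdot 4$ blocks through points of the $5$-group cover all $60$ cross pairs among the six $2$-groups, so every block meets the $5$-group, and the blocks through each such point induce a parallel class of triples on the twelve remaining points; this forces a resolvable $3$-GDD of type $2^6$ (a nearly Kirkman triple system on $12$ points), whose non-existence is itself a nontrivial known theorem. Without that reduction and citation, the exceptional status of $2^6 5^1$ is unproved.

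Second, the existence half is almost entirely deferred. For the sporadic types such as $2^8 5^1 8^1$, $3^7 9^1$, $2^9 5^1$, $2^{12}5^1$, $1^{14}7^2$ and its companions, you offer only the promise of ``ad hoc cyclic or group-developed constructions'' or Wilson-type weightings, with no base blocks, no master designs, and no specific citations; a single type for which no such construction materialises would falsify the theorem, as you yourself note. (Incidentally, $1^9 4^1$ is not stubborn: it is $S(2,4,13)$ with one block designated as a group.) So the proposal correctly identifies the shape of the argument and correctly isolates $2^2 5^5$ as the genuinely open case that motivates the rest of the paper, but it establishes neither the non-existence of $2^6 5^1$ nor the existence of the harder feasible types; the paper avoids both burdens by citing \cite{krestin} and \cite{5423}, which is where that work actually lives.
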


This paper investigates the existence of $4$-GDDs of type $2^t 5^s$. In Section~\ref{section.2255}, we provide a $4$-GDD of type $2^2 5^5$, thereby completing the existence results for the table of feasible types given by Kreher and Stinson in \cite{krestin} and paving the way for our further results on $4$-GDDs with group sizes $2$ and $5$. 
In Section~\ref{knowngdds}, we state results regarding $4$-GDDs of types $g^p$ and $g^p n^1$ which we will use in this paper.
In Section~\ref{directcon.automorphisms}, direct constructions are given for a number of $4$-GDDs of type $2^t 5^s$ as well as one of type $5^8 14^1 20^1$. Finally, in Section~\ref{recurs}, we give some general recursive construction methods and use them to obtain $4$-GDDs of type $2^t 5^s$ for all but a finite number of values of $s$ and $t$.

\section{A 4-GDD of type $2^2 5^5$} \label{section.2255}

In this section we give a direct construction for a $4$-GDD of type  $2^2 5^5$. 
This construction  was found by initially using  counting arguments and then making a computer search without assuming any automorphisms. Using the program Nauty by McKay and Piperno \cite{mckay.nauty}, it was confirmed that this design has no non-trivial automorphisms.

\begin{lemma} \label{2255} 
There exists a $4$-GDD of type  $2^2 5^5$.
\end{lemma}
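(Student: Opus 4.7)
The statement is a pure existence claim, so the plan is to exhibit an explicit collection $\mathcal{B}$ of $59$ blocks on $29$ points, with group set $\mathcal{G}$ consisting of two groups of size $2$ and five groups of size $5$, and then verify that every cross-group pair lies in exactly one block. A standard way to produce such a $\mathcal{B}$ is a computer search guided by counting arguments, rather than any algebraic ansatz, since the excerpt already flags that the final design admits only the trivial automorphism, so prescribing a cyclic or other automorphism group cannot reduce the search.

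First, I would set $X = \{1,\ldots,29\}$, fix a partition $\mathcal{G} = \{\{1,2\},\{3,4\},\{5,\ldots,9\},\ldots,\{25,\ldots,29\}\}$, and record the parameters forced by double counting:
\[
|\mathcal{B}| \;=\; \frac{\binom{29}{2} - 2\binom{2}{2} - 5\binom{5}{2}}{\binom{4}{2}} \;=\; 59,
\]
with each point of a size-$2$ group replicated $9$ times and each point of a size-$5$ group replicated $8$ times.

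Second, I would narrow the search by bookkeeping on \emph{block types}. Each block meets exactly four of the seven groups; letting $b_S$ count blocks meeting precisely the four groups indexed by $S$, the pair-coverage equations $\sum_{S \ni i,j} b_S = |G_i|\,|G_j|$ for every pair $i \neq j$, together with the total and replication identities, cut the admissible profiles $(b_S)$ down to a short list of nonnegative integer solutions. For each feasible profile I would run a backtracking search, placing blocks one at a time, ordering the remaining cross-group pairs by how few blocks can still cover them, and pruning as soon as any pair-coverage quota is exceeded or becomes unachievable.

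The main obstacle is the combinatorial blow-up of this search: with no nontrivial automorphism to exploit, one cannot collapse isomorphism classes of partial solutions, and every pruning decision must come from the pair-coverage bookkeeping together with careful variable ordering. Once the search returns a candidate $\mathcal{B}$, the proof itself is simply to display the $59$ blocks and verify that the $354$ cross-group pairs are each covered exactly once, a routine check easily done by hand or by a few lines of code.
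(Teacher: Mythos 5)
Your plan matches the paper's approach in spirit: the authors also found this design by a counting-guided computer search with no assumed automorphism (and then used nauty to confirm the automorphism group is trivial), and their proof consists precisely of exhibiting the resulting design --- the point set $\{1,\ldots,29\}$, the seven groups, and the $59$ blocks, presented as a table listing for each point the blocks that contain it. Your preliminary bookkeeping is all correct: $|\mathcal{B}|=59$, replication number $9$ for points in groups of size $2$ and $8$ for points in groups of size $5$, and $354$ cross-group pairs each to be covered exactly once.

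The genuine gap is that you never actually exhibit a design. For a bare existence claim of this kind the proof \emph{is} the explicit block list; a description of a backtracking search that \emph{would} find one, were one to exist, establishes nothing, because such a search is not guaranteed to succeed. The necessary conditions of Theorem~\ref{necessary} are not sufficient --- the feasible types $2^4$, $2^6 5^1$ and $6^4$ admit no $4$-GDD, and on those inputs your identical procedure would terminate empty-handed. You acknowledge that the final step is ``to display the $59$ blocks and verify,'' but until you append the concrete blocks (or an equivalent presentation such as the paper's point-by-point incidence table) together with the check that each of the $354$ cross pairs is covered exactly once, the lemma is not proved.
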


\begin{proof}
The point set for this design is $\{1,2, \ldots, 29\}$ and the groups are
$1-5$, $6-10$, $11-15$, $16-20$, $21-25$, $26-27$, $28-29$. The blocks are labelled as $1-59$.
For each point, we list the blocks which contain it in Table~\ref{tab.2255}.
\end{proof}

  \begin{table}[htb!]
  \begin{center}\small
  \caption{ Blocks containing each point for the $4$-GDD of type $2^2 5^5$ in Lemma~\ref{2255}. }   \label{tab.2255}
  \end{center}
\vspace{-0.7cm}

 {\small
 \noindent
 \begin{center} 
 \tabcolsep 5.6 pt
 \begin{tabular}{|l l|l l|}
\hline
  Point & Block numbers               & Point & Block numbers              \\ 
 \hline
 $1 $   & $(1,2,3,4,5,6,7,8)$         & $16$  & $(1,11,21,32,36,46,57,58)$  \\
 \hline
 $2 $   & $( 9,10,11,12,13,14,15,16)$ & $17$  & $(3, 9,23,28,38,44,55,59)$  \\
 \hline
 $3 $   & $(17,18,19,20,21,22,23,24)$ & $18$  & $(4,14,18,30,40,42,49,54)$  \\
 \hline
 $4 $   & $(25,26,27,28,29,30,31,32)$ & $19$  & $(5,12,24,26,39,41,48,56)$  \\
 \hline
 $5 $   & $(33,34,35,36,37,38,39,40)$ & $20$  & $(8,15,22,29,34,43,47,50)$  \\
 \hline
        &                             &       &                             \\
 \hline
 $6$    & $(1,9,17,25,33,41,42,43)$   & $21$  & $(2,11,22,30,37,41,52,59)$  \\
 \hline
 $7 $   & $(2,10,18,26,34,44,45,46)$  & $22$  & $(3,16,17,32,34,51,54,56)$  \\
 \hline
 $8 $   & $(3,11,19,27,35,47,48,49)$  & $23$  & $(6,13,20,26,38,43,49,58)$  \\
 \hline
 $9 $   & $(4,12,20,28,36,50,51,52)$  & $24$  & $(7,14,24,28,33,46,47,53)$  \\
 \hline
 $10$   & $(5,13,21,29,37,53,54,55)$  & $25$  & $(8,12,19,25,40,45,55,57)$  \\
 \hline
        &                             &       &                             \\
 \hline
 $11$  & $(1,10,19,30,38,50,53,56)$   & $26$  & $(4,15,17,27,39,45,53,58,59)$ \\
 \hline
 $12$  & $(2, 9,20,31,39,47,54,57)$   & $27$  & $(8,13,23,31,35,42,46,52,56)$ \\
 \hline
 $13$  & $(5,14,22,25,35,44,51,58)$   &       &                               \\
 \hline
 $14$  & $(6,15,18,32,33,48,52,55)$   & $28$  & $(6,16,24,27,37,42,44,50,57)$ \\
 \hline
 $15$  & $(7,16,23,29,36,41,45,49)$   & $29$  & $(7,10,21,31,40,43,48,51,59)$ \\
 \hline
  \end{tabular}
 \end{center}
}
\end{table}

We can now give the following update of Theorem~\ref{gddlt30}.

\begin{theorem} \label{gddlt30.complete}
The only feasible group types for a $4$-GDD on at most $30$ points are 
$1^4$,~$2^4$, $3^4$, $1^{13}$, $1^{9}4^1$, $2^7$, $3^5$,
$1^{16}$, $1^{12} 4^1$, $1^{8} 4^2$, $1^{4} 4^3$, $4^4$, $2^6 5^1$, $2^{10}$, $5^4$, $3^5 6^1$,
$1^{15} 7^1$, $2^9 5^1$, $3^8$, $3^4 6^2$, $6^4$, $1^{25}$, $1^{21} 4^1$, $1^{17} 4^2$, $1^{13}4^3$,
$1^9 4^4$, $1^5 4^5$, $1^1 4^6$, $2^{13}$, $2^3 5^4$, $2^9 8^1$, $3^9$, $3^5 6^2$, $3^1 6^4$,
$1^{28}$, $1^{24} 4^1$, $1^{20} 4^2$, $1^{16}4^3$, $1^{12} 4^4$,
$1^8 4^5$, $1^4 4^6$, $4^7$, $1^{14} 7^2$, $1^{10} 4^1 7^2$, $1^6 4^2 7^2$,
$1^2 4^3 7^2$, $7^4$, $2^{12} 5^1$, $2^2 5^5$, $2^8 5^1 8^1$, $3^8 6^1$, $3^4 6^3$, $6^5$, $3^7 9^1$.  
A~$4$-GDD exists for all these types with the definite exception of types $2^4$, $2^6 5^1$ and $6^4$.
\end{theorem}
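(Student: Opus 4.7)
The plan is simple: Theorem~\ref{gddlt30.complete} differs from Theorem~\ref{gddlt30} only in moving the type $2^2 5^5$ from the ``possibly'' column to the ``exists'' column. So the proof amounts to combining the earlier tabulation with the construction just given.

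First I would invoke Theorem~\ref{gddlt30} to obtain the complete list of feasible group types for a $4$-GDD on at most $30$ points; this list is reproduced verbatim in the statement, so there is nothing to reprove about feasibility. Theorem~\ref{gddlt30} also asserts that, among these feasible types, a $4$-GDD exists in every case except $2^4$, $2^6 5^1$, $6^4$ (definite non-existence) and $2^2 5^5$ (status unknown at the time).

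Next I would apply Lemma~\ref{2255}, which exhibits an explicit $4$-GDD of type $2^2 5^5$ on the point set $\{1,2,\dots,29\}$ via Table~\ref{tab.2255}. This settles the last open case, removing $2^2 5^5$ from the list of undetermined types and leaving only $2^4$, $2^6 5^1$ and $6^4$ as definite non-existences.

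There is no real obstacle here: the feasibility list and all existence results except one are already in Theorem~\ref{gddlt30}, and the final missing construction is provided by Lemma~\ref{2255}. The only thing to verify, if one wished to be thorough, is that the blocks in Table~\ref{tab.2255} do indeed form a $4$-GDD of the claimed type --- i.e.\ that every block has size $4$, meets each group at most once, and that every pair of points from distinct groups lies in exactly one block --- but this is a routine (and, per the lemma, computer-verified) check rather than a mathematical difficulty.
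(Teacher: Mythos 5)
Your proposal is correct and matches the paper's (implicit) argument exactly: the paper presents Theorem~\ref{gddlt30.complete} as an immediate update of Theorem~\ref{gddlt30} once Lemma~\ref{2255} supplies the previously unknown $4$-GDD of type $2^2 5^5$. Nothing further is needed.
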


\section{$4$-GDDs of types $g^p$ and $g^p n^1$} \label{knowngdds}

Extensive work has been done on existence of $4$-GDDs of types $g^p$ and $g^p n^1$. For type $g^p$, existence has been completely determined as given in Lemma~\ref{gp}.

\begin{lemma}{\rm\cite {BSH}}\label{gp}
There exists a $4$-GDD of type $g^p$  if and only if  
$(1)$  $p \geq 4$;   $(2)$  $g(p-1) \equiv 0\pmod{3}$;  $(3)$  $g^2p(p-1) \equiv 0\pmod{12}$ and $(4)$  $(g,p) \notin \{(2,4), (6,4)\}$.
\end{lemma}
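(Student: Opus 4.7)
The necessity of conditions $(1)$--$(3)$ is immediate from Theorem~\ref{necessary}: condition $(1)$ is explicit; $(2)$ is $v - g_i = g(p-1) \equiv 0 \pmod 3$; and $(3)$ is the pair-counting condition $g^2 p(p-1) \equiv 0 \pmod{12}$. For the two exceptional pairs in condition $(4)$, observe that a $4$-GDD of type $g^p$ with $p = 4$ is precisely a transversal design $\mathrm{TD}(4,g)$, equivalently a pair of mutually orthogonal Latin squares of order $g$; the non-existence for $g=2$ is trivial and for $g=6$ is the classical result of Tarry.

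For sufficiency, my plan is to first translate the congruences $(2)$ and $(3)$ into a case split by the residue of $g \pmod 6$. A short calculation shows: $g \equiv 0 \pmod 6$ admits every $p \geq 4$; $g \equiv 1, 5 \pmod 6$ requires $p \equiv 1, 4 \pmod{12}$; $g \equiv 2, 4 \pmod 6$ requires $p \equiv 1 \pmod 3$; and $g \equiv 3 \pmod 6$ requires $p \equiv 0, 1 \pmod 4$. This partitions the admissible pairs into a small, manageable family of arithmetic progressions to be realised for each $g$.

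The workhorse for realising these families would be Wilson's fundamental construction together with the rich existence theory of transversal designs. A $\mathrm{TD}(5,n)$, truncated by deleting some points from its fifth group, produces a $4$-GDD of a flexible ``master'' type $n^4 a^1$. Inflating each point by a factor $g$ and filling each inflated block with a $\mathrm{TD}(4,g)$ (available whenever $g \notin \{2,6\}$) then yields a $4$-GDD of type $(ng)^4 (ag)^1$; filling the fifth group by recursion on $g$, or filling holes using smaller $4$-GDDs of the same type, upgrades this to type $g^p$ for suitable $p$. Combining several such master designs with resolvable ingredients and PBD-closure arguments should cover every admissible pair $(g,p)$ once $p$ exceeds a threshold that depends only on $g \pmod{6}$.

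The principal obstacle, as in most theorems of this flavour, is the finite list of small seed cases that lie below the recursive threshold. For each residue class of $g$, a handful of small values of $p$ (characteristically $p \in \{4,5,7,8\}$) will need explicit construction, typically by imposing a cyclic or other abelian automorphism group and running a short computer search on the resulting base blocks. Verifying that no genuinely new exceptional type arises in this low-parameter range beyond $(g,p) \in \{(2,4),(6,4)\}$ is where the real combinatorial work is concentrated, and it is the piece I would expect to account for most of the length of a self-contained proof.
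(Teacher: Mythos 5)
The paper offers no proof of this lemma at all: it is quoted from Brouwer, Schrijver and Hanani \cite{BSH} as a known classical result, so there is no internal argument to compare yours against. Judged on its own terms, your necessity half is correct and complete: conditions $(1)$--$(3)$ follow from Theorem~\ref{necessary} exactly as you describe (with the $\bmod\ 12$ form being the raw pair count, as the paper itself remarks), and identifying type $g^4$ with a $\mathrm{TD}(4,g)$, i.e.\ two MOLS of order $g$, correctly disposes of the exceptions $(2,4)$ and $(6,4)$. Your translation of $(2)$ and $(3)$ into residue classes of $p$ for each class of $g \bmod 6$ is also accurate.

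The sufficiency half, however, is a plan rather than a proof, and it contains one concrete step that fails as stated. Your inflation scheme gives every point of a truncated $\mathrm{TD}(5,n)$ weight $g$ and fills the inflated blocks with $\mathrm{TD}(4,g)$'s, which you correctly note requires $g \notin \{2,6\}$; but the lemma still asserts the existence of types $2^p$ for all $p \equiv 1 \pmod 3$ with $p \geq 7$ and $6^p$ for all $p \geq 5$, and your sketch offers no route to these two infinite families. They cannot be recovered by the same inflation (nor, for $6^p$ with $p \not\equiv 1 \pmod 3$, by inflating a type-$2^p$ or type-$3^p$ design, since one of the needed transversal designs is always $\mathrm{TD}(4,2)$ or $\mathrm{TD}(4,6)$); in the literature they come from different machinery, e.g.\ resolvable and near-resolvable triple systems. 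Beyond that, the ``handful of small seed cases'' you defer to a computer search is precisely where the substance of the 1977 theorem lies; acknowledging that this work remains is not the same as doing it. So the proposal establishes necessity and the two exceptions, but not sufficiency.
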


For type $g^p n^1$ with $p \geq 4$, necessary conditions for existence are $g \equiv n\pmod{3}$, $gp \equiv 0\pmod{3}$,
$gp\big(g(p-1) + 2n\big) \equiv 0\pmod{4}$ and $0 \leq n \leq g(p-1)/2$.  
The existence of such designs has been determined in many cases;
see for instance Forbes~\cite{Forbes1}, \cite{Forbes2}, \cite{Forbes3}, Ge et al.~\cite{gezhu}, Ge and Rees~\cite{gerees}, Wei and Ge~\cite{gegum}, and Rees~\cite{reesk=45}.  
However, for a number of these feasible types the question of the existence of a corresponding design remains unanswered.

In this paper we make use of the existence results given in Lemmas~\ref{gpn1.0mod6} to \ref{5pn1}.

\begin{lemma} {\rm~\cite{Forbes1, gegum}} \label{gpn1.0mod6}
Suppose that $g \equiv 0\pmod{6}$, $g \geq 6$, $p \geq 4$, $n \equiv 0\pmod{3}$ and $0 \leq n \leq g(p-1)/2$.
Then there exists a $4$-GDD of type $g^p n^1$ except when $(g,p,n) = (6,4,0)$.
\end{lemma}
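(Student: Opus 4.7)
The lemma is attributed to Forbes~\cite{Forbes1} and Wei--Ge~\cite{gegum}, so my plan is to assemble it from standard GDD machinery rather than reprove every ingredient from scratch.

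First I would dispose of the case $n=0$. Since $g\equiv 0\pmod{6}$ automatically satisfies conditions (2) and (3) of Lemma~\ref{gp}, a $4$-GDD of type $g^p$ exists for every $p\geq 4$ except $(g,p)=(6,4)$, which is precisely the sole exception appearing in the statement. So from here on, I assume $n>0$.

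For $n>0$ I would argue in two layers. Layer one treats $g=6$: for each admissible pair $(p,n)$ with $n\equiv 0\pmod 3$ and $0<n\leq 3(p-1)$, construct a $4$-GDD of type $6^p n^1$. The typical route is to start from a transversal design TD$(5,p)$, truncate the fifth group to $n/3$ points, apply Wilson's fundamental construction with weight $3$ using $4$-GDDs of types $3^4$ and $3^5$ as ingredients, and fill in the resulting groups. The finite residue of small-$p$ or extremal-$n$ cases is handled by direct cyclic or short-orbit constructions as in~\cite{Forbes1, gegum}, possibly with computer assistance. Layer two treats $g=6m$ with $m\geq 2$: take a seed of type $6^p n_0^1$ from layer one and inflate every point by weight $m$, replacing each block by a TD$(4,m)$. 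When $m\in\{2,6\}$ a TD$(4,m)$ does not exist, so I would instead substitute a $4$-GDD of type $m^4$ together with an ad hoc filling-in-groups argument (for $m=2$, note $4$-GDDs of type $2^k$ are covered by Lemma~\ref{gp} for $k\geq 5$). The output has type $(6m)^p (mn_0)^1=g^p n^1$ with $n=mn_0$. By varying $n_0$ across all admissible base values and, where necessary, mixing inflation weights along a parallel class of the seed, every admissible $n$ in $[0, g(p-1)/2]$ can be attained.

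The principal obstacle is layer one: the feasible pairs $(p,n)$ at $g=6$ are dense enough that a finite but non-trivial catalogue of seed designs cannot be produced by inflation from below, and these seeds must be built directly via difference families or short-orbit computer searches. Once that catalogue is in place, the layer-two inflation step is mechanical, and the only surviving exception is the one inherited from the $n=0$ case, namely $(g,p,n)=(6,4,0)$.
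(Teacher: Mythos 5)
The first thing to note is that the paper does not prove this statement at all: Lemma~\ref{gpn1.0mod6} is quoted from Forbes--Forbes~\cite{Forbes1} and Wei--Ge~\cite{gegum} as a known result, so there is no ``paper proof'' for your argument to match. The only overlap is your opening observation that the $n=0$ case reduces to Lemma~\ref{gp}, which does correctly account for the lone exception $(g,p,n)=(6,4,0)$; that part is fine.

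Your sketch for $n>0$, however, has concrete failures. First, the layer-one construction does not produce the claimed type: truncating the fifth group of a TD$(5,p)$ to $n/3$ points and applying Wilson's fundamental construction with constant weight $3$ yields a $4$-GDD of type $(3p)^4\, n^1$ --- four large groups plus one --- not $6^p n^1$ with $p$ groups of size $6$. To reach many groups of size $6$ one must afterwards fill in the groups (as the paper does elsewhere via Construction~\ref{fillin}), and that step requires its own ingredient designs and divisibility bookkeeping that the sketch omits. Second, and more seriously, the layer-two fallback is impossible as stated: for exactly the two weights $m\in\{2,6\}$ where TD$(4,m)$ fails to exist, the proposed substitute, a $4$-GDD of type $m^4$, also fails to exist --- types $2^4$ and $6^4$ are both among the definite exceptions in Lemma~\ref{gp} and Theorem~\ref{gddlt30.complete} --- so the blockwise inflation from $g=6$ to $g=12$ or $g=36$ cannot be carried out this way. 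The suggested remedy of ``mixing inflation weights along a parallel class'' further presupposes that the seed GDD is resolvable, which is not established. None of this affects the truth of the lemma, which is a theorem of the cited papers, but your reconstruction is not a proof of it.
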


\begin{lemma}{\rm\cite{Forbes1, Forbes3, gegum}} \label{2pn1}
Suppose that $g \in \{2,20\}$, $p \geq 4$, $p \equiv 0 \pmod{3}$, $n \equiv 2\pmod{3}$ and $2 \leq n \leq g(p-1)/2$.
Then there exists a $4$-GDD of type $g^p n^1$.
\end{lemma}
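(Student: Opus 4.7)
The lemma consolidates two specific consequences of the extensive existence literature on $4$-GDDs of type $g^p n^1$. My plan is to (i) verify that the necessary conditions listed after Lemma~\ref{gp} are met throughout the claimed parameter region, and then (ii) quote the cited papers for each of the two sub-cases $g=2$ and $g=20$. Step~(i) is a routine mod-$3$ and mod-$4$ check: with $g\in\{2,20\}$ and $n\equiv 2 \pmod{3}$, we have $g\equiv n \pmod{3}$; with $p\equiv 0 \pmod{3}$, $gp\equiv 0 \pmod{3}$; and since $g$ is even, $gp\bigl(g(p-1)+2n\bigr) \equiv 0 \pmod{4}$ is automatic. The bound $n\le g(p-1)/2$ is already assumed.

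For $g=2$, I would invoke Forbes~\cite{Forbes1}, which essentially settles the existence spectrum of $4$-GDDs of type $2^p n^1$ and in particular leaves no exceptions in the sub-range $p\equiv 0 \pmod{3}$, $n\equiv 2 \pmod{3}$, $2\le n \le p-1$. For $g=20$, the result follows from Forbes~\cite{Forbes3} together with the Wei--Ge theorem in~\cite{gegum}; their general constructions, which typically proceed by weighting (inflating a $4$-GDD of type $2^p m^1$ by a factor of ten using a transversal design $\mathrm{TD}(4,10)$) or by a truncation/filling argument starting from a $4$-GDD of type $20^{p+1}$, cover every value of $n$ in the range $2\le n \le 10(p-1)$.

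The main obstacle in a compilation lemma of this kind is careful bookkeeping: I would need to verify that every pair $(p,n)$ with $p\equiv 0 \pmod{3}$, $n\equiv 2 \pmod{3}$ and $2\le n\le g(p-1)/2$ really does lie within the hypothesis region of some cited theorem, with particular attention to boundary cases (small $p$, and $n$ close to either $2$ or $g(p-1)/2$). Any residual uncovered cases would have to be supplied by \emph{ad hoc} direct constructions of the kind exhibited in Section~\ref{section.2255}, but I expect none to arise for these two values of $g$.
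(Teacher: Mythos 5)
Your proposal matches the paper's treatment: Lemma~\ref{2pn1} is stated as a quoted result with no proof beyond the citations to \cite{Forbes1, Forbes3, gegum}, and your plan---check the standard necessary conditions for type $g^p n^1$ and then invoke those same references for $g=2$ and $g=20$---is exactly that approach with the routine congruence checks made explicit. No gap.
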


\begin{lemma}{\rm\cite{Forbes2, gegum}} \label{3pn1}
Suppose that $g \equiv 3 \pmod{6}$, $g \leq 27$, $p \geq 4$ and either 
\begin{itemize}
    \item $p \equiv 0\pmod{4}$, $n \equiv 0\pmod{3}$ and $0 \leq n \leq (g(p-1)-3)/2$, or
    \item $p \equiv 1\pmod{4}$, $n \equiv 0\pmod{6}$ and $0 \leq n \leq g(p-1)/2$, or
    \item $p \equiv 3\pmod{4}$, $n \equiv 3\pmod{6}$ and $3 \leq n \leq g(p-1)/2$.
\end{itemize}
Then there exists a $4$-GDD of type $g^p n^1$.
\end{lemma}

\begin{lemma}{\rm\cite{Forbes1, gegum}} \label{5pn1} 
Suppose that $p \geq 4$ and either
\begin{itemize}
    \item $p \equiv 0\pmod{12}$, $n \equiv 2\pmod{3}$ and $2 \leq n \leq 5(p-1)/2$, or
    \item $p \equiv 3\pmod{12}$, $n \equiv 5\pmod{6}$ and $5 \leq n \leq 5(p-1)/2$, or
    \item $p \equiv 9\pmod{12}$, $n \equiv 2\pmod{6}$ and $2 \leq n \leq 5(p-1)/2$.
\end{itemize}
Then there exists a $4$-GDD of type $5^p n^1$.
\end{lemma}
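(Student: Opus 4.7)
The plan is first to verify that the three congruence conditions listed exhaust the \emph{feasible} pairs $(p,n)$ with $p \geq 4$ and $p \equiv 0, 3, 9 \pmod{12}$, and then to invoke the constructive existence results of Forbes~\cite{Forbes1} and Wei and Ge~\cite{gegum} to exhibit a 4-GDD for each such pair.

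I would begin by applying Theorem~\ref{necessary} to the group type $5^p n^1$, where $v = 5p+n$. Condition~2 ($v - g_i \equiv 0\pmod{3}$) yields $p \equiv 0\pmod{3}$ and $n \equiv 2\pmod{3}$. Condition~3 simplifies to $5p(5p + 2n - 5) \equiv 0\pmod{4}$, equivalently $p(p+2n-1) \equiv 0\pmod{4}$. Splitting on $p$ modulo $4$: this is automatic for $p \equiv 0\pmod{4}$, forces $n$ to be odd when $p \equiv 3\pmod{4}$, forces $n$ to be even when $p \equiv 1\pmod{4}$, and is \emph{unsatisfiable} when $p \equiv 2\pmod{4}$. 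Intersecting with $p \equiv 0\pmod{3}$ and combining with $n \equiv 2\pmod{3}$ returns exactly the three stated classes; the fourth class $p \equiv 6\pmod{12}$ admits no feasible $n$ and is therefore correctly omitted. Condition~4 applied to the two possible group-size pairs yields the upper bound $n \leq 5(p-1)/2$ (the other inequality $3 \cdot 5 + n \leq v$ being trivial for $p \geq 4$).

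For the constructive side I would follow the standard Wilson-type recipe used in the cited papers. The natural building blocks are the 4-GDDs of type $5^p$, which exist by Lemma~\ref{gp} precisely when $p \equiv 1$ or $4\pmod{12}$, together with transversal designs $\mathrm{TD}(5,q)$ of block size $5$. A tail group of size $n$ is then adjoined either by the ``$n$ infinite points'' device, filling each extended block with a small ingredient 4-GDD on $5+n$ or $10+n$ points, or by Wilson's fundamental construction applied to a suitable master PBD, assigning weight $5$ to the bulk of the points and arranging the remaining weights so the final long group acquires size $n$. This propagates the known small 4-GDDs across the full feasibility range $(p,n)$ modulo a finite set of exceptions in each residue class.

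The main obstacle, as is typical for this sort of family, is that the recursive constructions inevitably fail to cover a finite collection of small or extremal pairs $(p,n)$, especially those with $p$ close to $4$ or with $n$ close to the upper bound $5(p-1)/2$. These boundary cases demand direct constructions, usually by computer search exploiting a cyclic, $1$-rotational, or similar convenient automorphism group on the point set of size $5p+n$, and it is precisely this catalogue of direct constructions (combined with the Wilson-type recursion above) that is assembled in \cite{Forbes1} and \cite{gegum}.
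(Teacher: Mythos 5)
The paper does not prove this lemma at all: it is imported verbatim from Forbes~\cite{Forbes1} and Wei--Ge~\cite{gegum}, and the only ``proof'' in the paper is the citation. So there is nothing internal to compare your argument against. Measured as a standalone proof, your proposal splits into two parts of very different status. The feasibility analysis is correct and complete: condition~2 of Theorem~\ref{necessary} does give $p \equiv 0 \pmod{3}$ and $n \equiv 2 \pmod{3}$, condition~3 does reduce to $p(p+2n-1) \equiv 0 \pmod{4}$ with the case split you describe (in particular ruling out $p \equiv 6 \pmod{12}$ entirely), and condition~4 gives $n \leq 5(p-1)/2$. This confirms that the three bullet points are exactly the feasible classes, which is a worthwhile sanity check the paper does not spell out.

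The existence half, however, is not a proof but a description of a proof strategy. Saying that one adjoins a tail group via infinite points or Wilson's fundamental construction, and that the residual boundary cases ``demand direct constructions'' found by computer, does not establish existence for any particular $(p,n)$; the entire content of the lemma is that the recursion plus the catalogue of direct constructions in \cite{Forbes1, gegum} covers \emph{every} feasible pair with no exceptions, and that is precisely the part your outline defers to the references. (One small imprecision: the $4$-GDDs of type $5^p$ you propose as building blocks exist only for $p \equiv 1$ or $4 \pmod{12}$, which is disjoint from the residues $0,3,9 \pmod{12}$ the lemma concerns, so they can only enter as ingredients inside a larger master design, not as the starting master itself.) Since the paper treats the lemma as an external black box, your approach is not in conflict with it, but you should be explicit that the constructive half of your argument is a citation in disguise rather than an independent derivation.
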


\section{$4$-GDDs of type $2^t 5^s$} \label{5s2t.constructions}

From Theorem~\ref{gddlt30}, for $4$-GDDs of type $2^t 5^s$ on up to $30$ points, the only feasible group types for which it is known that there do not exist corresponding $4$-GDDs are $2^4$ and $2^6 5^1$.

In Lemma~\ref{necess2t5s}, we note two necessary conditions for the existence of a $4$-GDD of type $2^t 5^s$.

\begin{lemma} \label{necess2t5s}
If a $4$-GDD of type $2^t 5^s$ exists, then $t+s \equiv 1 \pmod{3}$ and $s \equiv 0$ or $1 \pmod{4}$.
\end{lemma}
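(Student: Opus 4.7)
The plan is to derive both congruences directly from the necessary conditions in Theorem~\ref{necessary}, which are the only tools needed. Set $v = 2t + 5s$.

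First I would handle $s+t \equiv 1\pmod 3$ using Condition 2. Whenever $t \geq 1$, applying the condition with $g_i = 2$ gives $2t + 5s - 2 \equiv 0 \pmod 3$, which reduces to $2(t+s-1) \equiv 0 \pmod 3$, hence $s + t \equiv 1 \pmod 3$. Symmetrically, when $s \geq 1$, applying it with $g_i = 5$ gives the same congruence $2(t+s-1) \equiv 0 \pmod 3$. Since a $4$-GDD requires at least one group, at least one of these applies and the conclusion follows uniformly.

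For the second condition I would apply Condition 3: $\sum_{i=1}^m g_i(v - g_i) \equiv 0 \pmod 4$. Here the sum becomes
\[
2t(v-2) + 5s(v-5) \;=\; (2t+5s)v - 4t - 25s \;=\; v^2 - 4t - 25s.
\]
Reducing modulo $4$, the terms $v^2 = (2t+5s)^2 \equiv s^2$ and $-4t \equiv 0$ and $-25s \equiv 3s$ combine to give $s^2 + 3s \equiv s(s-1) \pmod 4$. So the necessary condition becomes $s(s-1) \equiv 0 \pmod 4$, which holds precisely when $s \equiv 0$ or $1 \pmod 4$.

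There is really no hard step here; the only thing to be careful about is the boundary cases where $t = 0$ or $s = 0$, where one has to verify that whichever instance of Condition 2 applies still yields $s + t \equiv 1 \pmod 3$. Both instances yield the same congruence, so the argument is uniform and no case distinction is actually required in the write-up.
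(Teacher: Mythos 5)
Your proposal is correct and follows essentially the same route as the paper: Condition 2 of Theorem~\ref{necessary} applied to a group of either size yields $2(t+s-1)\equiv 0\pmod 3$, and Condition 3 reduces to $s(s-1)\equiv 0\pmod 4$; your rewriting of the sum as $v^2-4t-25s$ is just a cosmetic variant of the paper's direct expansion $4t(t-1)+20st+25s(s-1)$.
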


\begin{proof}
The necessary condition  $t+s \equiv 1 \pmod{3}$ follows from Theorem~\ref{necessary}(2) since $v-g_i \in \{2(t-1) +5s,  2t+5(s-1)\}$ and both these values are equivalent to $2(t+s-1) \pmod{3}$. The necessary condition $s \equiv 0$ or $1 \pmod{4}$ follows from Theorem~\ref{necessary}(3) since 
\begin{align*}
      \sum_{i=1}^m (g_i)(v-g_i) 
  &= 2t\big((2(t-1) + 5s\big) + 5s\big(2t + 5(s-1)\big)\\
  & = 4t(t-1) + 20st + 25s(s-1) \\
  & \equiv s(s-1) \pmod{4}\,.\qedhere
\end{align*}
\end{proof}

If $t+s > 4$ and either $s$ or $t$ lies in $\{0,1\}$, 
then the necessary conditions in Theorem~\ref{necessary} are known to be sufficient (see Lemmas~\ref{gp}, \ref{2pn1} and \ref{5pn1}). If $t+s = 4$, they are sufficient only if $(t,s) = (0,4)$.
It remains to investigate the existence of $4$-GDDs of type $2^t 5^s$ with $t \geq 2$ and $s \geq 2$.

\subsection{Direct constructions using automorphism groups} \label{directcon.automorphisms}

For the direct constructions in this section, we assume the existence of a cyclic automorphism group, $\mathbb{G}$.
We then obtain a number of suitable base blocks by computer. In each case, the point set of the design consists of one or more copies of the group~$\mathbb{G}$ 
(where  $\mathbb{G}$ is $\mathbb{Z}_4$, $\mathbb{Z}_7$, $\mathbb{Z}_8$, $\mathbb{Z}_9$, $\mathbb{Z}_{10}$ or $\mathbb{Z}_{20}$) plus possibly one copy of $\mathbb{Z}_2$ or $\mathbb{Z}_5$ and one or more infinite points.
Blocks in these designs are obtained by developing the subscripts of the points from copies of $\mathbb{G}$ over $\mathbb{G}$; the infinite points remain unaltered when developed.
When the point set includes any copies of $\mathbb{Z}_2$ or $\mathbb{Z}_5$, those points are developed over $\mathbb{Z}_2$ or $\mathbb{Z}_5$ as the others are developed over~$\mathbb{G}$.

In the past, most designs have been found using larger automorphism groups. For instance, $4$-GDDs of type $g^p n^1$ in \cite{Forbes1}, \cite{Forbes2}, \cite{Forbes3} and \cite{gegum} frequently had an automorphism group of order $gp$ or $gp/2$. Some $4$-GDDs with small automorphism groups (frequently with order $\leq 12$) can be found in \cite{5423}, \cite{ABC.50less}, \cite{ABBC.2t8s}, and in Lemma~2.1 of \cite{Forbes1}, \cite{Forbes2} and \cite{Forbes3}.

Theorem~\ref{gdd50less.directcon} summarises some known results for 4-GDDs of type $2^t 5^s$
which were obtained from direct constructions in \cite{ABC.50less} and \cite{ABBC.2t8s}. Theorem~\ref{directcon} summarises the direct constructions that are given in this paper; these are grouped according to their number of points, $v$. 

\begin{theorem} \label{gdd50less.directcon}
There exist $4$-GDDs of the types $2^6 5^4$, $2^5 5^5$, $2^9 5^4$, $2^8 5^5$, $2^{12} 5^4$, $2^2 5^8$, $2^{11} 5^5$, $2^{15} 5^4$ and $2^5 5^8$.

\begin{proof}
These designs are given in \cite{ABC.50less} except for type $2^2 5^8$ which is given in \cite{ABBC.2t8s}.
\end{proof}
\end{theorem}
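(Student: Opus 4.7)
The plan is to treat Theorem~\ref{gdd50less.directcon} as a bookkeeping statement rather than as a new construction theorem: each of the nine $4$-GDDs has already been built explicitly in prior work by the present authors, and the proof consists of verifying feasibility and assembling the references.

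First, I would confirm that every listed type $2^t 5^s$ passes the necessary conditions from Lemma~\ref{necess2t5s}. The $s$-values appearing here are $4$, $5$ and $8$, which satisfy $s\equiv 0$ or $1\pmod 4$ automatically, so the only nontrivial check is $s+t\equiv 1\pmod 3$, which is a one-line calculation in each of the nine cases (e.g.\ $6+4=10\equiv 1$, $5+5=10\equiv 1$, $9+4=13\equiv 1$, and so on). Second, I would match each type to its source: the eight types $2^6 5^4$, $2^5 5^5$, $2^9 5^4$, $2^8 5^5$, $2^{12} 5^4$, $2^{11} 5^5$, $2^{15} 5^4$ and $2^5 5^8$ are constructed in \cite{ABC.50less}, while $2^2 5^8$ was obtained in \cite{ABBC.2t8s} as part of the broader study of $4$-GDDs of type $2^t 8^s$.

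For due diligence I would sketch the verification procedure inherited from the cited sources, following the framework described at the start of Section~\ref{directcon.automorphisms}: in each case the design is presented as a short list of base blocks on a point set built from copies of a cyclic group $\GG\in\{\Z_4,\Z_7,\Z_8,\Z_9,\Z_{10},\Z_{20}\}$ possibly augmented by a copy of $\Z_2$ or $\Z_5$ and a few fixed infinite points. Developing these base blocks under the prescribed group action, one then checks mechanically that each resulting block meets every group in at most one point and that every pair of points from distinct groups is covered exactly once; this reduces to a routine orbit-and-difference computation on the base blocks.

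The main obstacle, such as it is, lies entirely upstream in the cited papers: locating the base blocks required a targeted computer search over the chosen cyclic group, and the present theorem inherits the correctness of those searches. Here there is no new combinatorial content to supply --- the only work is to collate the nine constructions so that they may be invoked as an input to the recursive arguments that follow in Section~\ref{recurs}.
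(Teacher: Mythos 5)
Your proposal is correct and follows essentially the same route as the paper: the result is a citation-based collation, with the eight types other than $2^2 5^8$ attributed to \cite{ABC.50less} and $2^2 5^8$ to \cite{ABBC.2t8s}, exactly as in the paper's one-line proof. The additional feasibility checks and the sketch of how the cited base-block constructions are verified are harmless but not required.
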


\begin{theorem}\label{directcon}
There exists a $4$-GDD of type $5^8 14^1 20^1$. 
There also exist $4$-GDDs of the following types$:$
\begin{itemize}
    \item $v=53:$ $2^{14} 5^5$, $2^4 5^9;$ 
    \item $v=56:$ $2^{18} 5^4$, $2^8 5^8;$ 
    \item $v=59:$ $2^{17} 5^5;$ 
    \item $v=62:$ $2^{11} 5^8;$ 
    \item $v=65:$ $2^{20} 5^5;$ 
    \item $v=68:$ $2^{14} 5^8;$ 
    \item $v=77:$ $2^{26} 5^5$, $2^{16} 5^9$, $2^6 5^{13};$ 
    \item $v=83:$ $2^{19} 5^9.$
\end{itemize}

\begin{proof}
The designs are given in the Appendix in Tables~\ref{21455} to \ref{58141201}.
\end{proof}
\end{theorem}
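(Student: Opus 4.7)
The plan is to construct each of the listed $4$-GDDs directly by the cyclic base-block method described at the start of Section~\ref{directcon.automorphisms}. For each target type, I would first choose a cyclic automorphism group $\mathbb{G}\in\{\mathbb{Z}_4,\mathbb{Z}_7,\mathbb{Z}_8,\mathbb{Z}_9,\mathbb{Z}_{10},\mathbb{Z}_{20}\}$ together with a compatible decomposition of the $v$ points into one or more full $\mathbb{G}$-orbits, at most one short orbit (a copy of $\mathbb{Z}_2$ or $\mathbb{Z}_5$ sitting inside $\mathbb{G}$), and a small number of fixed ``infinite'' points. Here ``compatible'' means that the prescribed groups of the GDD (of sizes $2$, $5$, $14$, or $20$) are realised either as unions of $\mathbb{G}$-orbits or, within a single free orbit, as cosets of a suitable subgroup of $\mathbb{G}$. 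Taking $\mathbb{G}$ as large as the arithmetic of $v$ allows is generally advantageous because it both reduces the number of base blocks needed and sharpens the constraints on the search. For the exceptional type $5^8 14^1 20^1$ with $v=74$, for example, $\mathbb{G}=\mathbb{Z}_{20}$ is essentially forced by the size-$20$ group; for the $2^t 5^s$ types with $v$ a multiple of $10$ (such as $2^{20} 5^5$), $\mathbb{G}=\mathbb{Z}_{10}$ is the natural first choice.

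With a fixed $\mathbb{G}$-action, each block orbit is specified by one base block, and the $4$-GDD condition becomes a difference condition: over all base blocks, each $\mathbb{G}$-orbit of unordered pairs of points from distinct GDD-groups must be covered exactly once by the developed blocks. Enumerating candidate base blocks is straightforward (discard those hitting a group twice), and the collection of compatible base blocks forms an exact-cover problem which I would solve by backtracking, in the spirit of Knuth's dancing-links algorithm, pruned by the multiset of pair-differences already used and by the orbit structure of $\mathbb{G}$ on the candidate blocks. For large $|\mathbb{G}|$ relatively few base blocks are needed, so the search is narrow but deep; for smaller $|\mathbb{G}|$ the search is wider but each branch terminates faster.

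The main obstacle is guessing the right automorphism pattern at the outset. A poor choice of $\mathbb{G}$ or of the short-orbit/infinite-point distribution can make the instance infeasible, either because no solution exists under that symmetry (the divisibility conditions on pair-orbits cannot all be satisfied) or because the resulting search tree is prohibitively large. I would therefore proceed type by type, trying the most natural cyclic decomposition first and falling back to smaller groups $\mathbb{Z}_4,\mathbb{Z}_5,\mathbb{Z}_7$ with more infinite points, or to a direct no-automorphism search as in Lemma~\ref{2255}, only when the larger-symmetry attempts fail. Once base blocks are found, the designs would be listed in tables in the Appendix in a format analogous to Table~\ref{tab.2255}, and correctness verified mechanically by checking that every pair of points from distinct groups is covered exactly once by the developed blocks, which amounts to the arithmetic identity $\sum_i g_i(v-g_i)=12|\mathcal{B}|$ together with a pair-by-pair check.
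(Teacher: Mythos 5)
Your description of the method matches what the paper actually does: each of these designs is produced by a computer search for base blocks under a cyclic automorphism group ($\mathbb{Z}_4$, $\mathbb{Z}_7$, $\mathbb{Z}_8$, $\mathbb{Z}_9$, $\mathbb{Z}_{10}$ or $\mathbb{Z}_{20}$, with short orbits and infinite points as needed), and the resulting base blocks are listed in the appendix tables. However, as a proof your proposal has a genuine gap: it never exhibits the designs. A theorem of this kind is not a consequence of any general principle --- the necessary conditions are known to be insufficient (e.g.\ $2^6 5^1$ and $6^4$ are feasible but nonexistent) --- so the only content of the proof is the explicit witness data, namely the base blocks that one then verifies cover every cross-group pair exactly once. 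A description of how one would organise the search, however sensible, does not establish that the search succeeds for any particular type, let alone for all thirteen listed types; you acknowledge this yourself when you note that a chosen symmetry may admit no solution. Until the tables of base blocks are actually produced and checked, the statement remains unproved.

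Two smaller points. First, your verification criterion should not lean on the identity $\sum_i g_i(v-g_i)=12|\mathcal{B}|$; that count is automatic once the block orbits have the right lengths and says nothing about whether each pair is covered exactly once rather than some pairs twice and others not at all --- the pair-by-pair check is the whole verification. Second, the actual constructions are not all of the uniform template you describe: for type $2^{17}5^5$ the paper first lays down a $4$-GDD of type $2^7$ on a fixed $14$-point subset and only then develops base blocks modulo $10$, and the auxiliary point classes ($\mathbb{Z}_2$ or $\mathbb{Z}_5$ copies) are developed over quotients rather than sitting as short orbits inside $\mathbb{G}$. These are the kinds of ad hoc adjustments that only become visible once one is forced to write down the designs, which is precisely the step your proposal defers.
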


\subsection{Recursive Constructions} \label{recurs}

One tool for constructing GDDs from other GDDs is that of `filling in groups'.  
This is given in Construction~\ref{fillin} in the form applicable to $k$-GDDs.  
A proof of this construction result can be found in~\cite[Theorem 1.4.12]{fmy}.

\begin{construction}\label{fillin}
Suppose that a $k$-GDD of type $\{g_1, g_2, \ldots, g_m\}$ exists and let $u$ be a non-negative integer.
Suppose also that, for each integer $i=1,2, \ldots, m-1$, there exists a $k$-GDD of type
$\{g(i,1),g(i,2),\ldots,g(i,s_i),u\}$ for which $\sum_{j=1}^{s_i} g(i,j)=g_i$.
Then there exists a $k$-GDD of type $\{g(1,1), g(1,2),\ldots, g(1,s_1), g(2,1), g(2,2),\ldots, g(2,s_2),\ldots, g(m-1,1), g(m-1,2),\ldots, g(m-1,s_{m-1}), g_m + u\}$.
If further, there exists a $k$-GDD on $g_m + u$ points of type  $\{g(m,1), g(m,2), \ldots,$ $g(m,s_m)\}$,
then there exists a $k$-GDD of type $\{g(1,1), g(1,2), \ldots, g(1,s_1), g(2,1), 
g(2,2), \ldots, g(2,s_2),$ $\ldots, g(m,1), g(m,2), \ldots, g(m,s_m)\}$.
\end{construction}

\begin{lemma} \label{fillin.from.direct}
There exist $4$-GDDs of types $2^{17} 5^8$ and $2^7 5^{12}$.

\begin{proof} 
Start with a $4$-GDD of type  $5^8 14^1 20^1$ in Table~\ref{58141201} in the Appendix and apply Construction~\ref{fillin} with $u=0$, filling in the group of size $14$ with a $4$-GDD of types $2^7$ and the group of size $20$ with $4$-GDDs of types $2^{10}$ and $5^4$ for types $2^{17} 5^8$ and $2^7 5^{12}$, respectively.
\end{proof}
\end{lemma}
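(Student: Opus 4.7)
The plan is to apply the filling-in-groups method (Construction~\ref{fillin}) to the $4$-GDD of type $5^8 14^1 20^1$ established in Theorem~\ref{directcon}, taking $u=0$ so that no auxiliary points are introduced. With $u=0$ the construction simply replaces each chosen group by a smaller $4$-GDD on that same point set, with the blocks of the master design keeping all points in distinct groups of the new design. The eight groups of size $5$ can be left untouched.

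For type $2^{17} 5^8$, I would fill the group of size $14$ with a $4$-GDD of type $2^7$ and the group of size $20$ with a $4$-GDD of type $2^{10}$. Both auxiliary designs exist by Theorem~\ref{gddlt30}. The resulting design has $7+10=17$ groups of size $2$ together with the $8$ groups of size $5$ inherited from the master design. For type $2^7 5^{12}$, I would again fill the group of size $14$ with a $4$-GDD of type $2^7$, but now fill the group of size $20$ with a $4$-GDD of type $5^4$ (which also exists by Theorem~\ref{gddlt30}), producing $7$ groups of size $2$ and $8+4=12$ groups of size $5$.

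No genuine obstacle remains once the master design is in hand: the arithmetic of the group sizes works out exactly (both $14=2\cdot 7$ and $20=2\cdot 10=5\cdot 4$), and each required ingredient GDD is already known to exist. The substantive content of the argument therefore lies entirely in the direct construction of the $4$-GDD of type $5^8 14^1 20^1$ from Section~\ref{directcon.automorphisms}, which is invoked here as a black box.
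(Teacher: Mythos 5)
Your proposal is correct and follows exactly the paper's own argument: fill the size-$14$ group of the $5^8 14^1 20^1$ master design with a $4$-GDD of type $2^7$ and the size-$20$ group with $2^{10}$ or $5^4$ via Construction~\ref{fillin} with $u=0$. The only cosmetic difference is that you cite Theorem~\ref{gddlt30} for the ingredient designs, which is a perfectly valid source.
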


\begin{lemma} \label{fillins.from.gp}
There exist $4$-GDDs of types $2^{30} 5^4$, $2^{20} 5^8$, $2^{10} 5^{12}$ and $2^{36} 5^4$.

\begin{proof}
For types $2^{30} 5^4$, $2^{20} 5^8$ and $2^{10} 5^{12}$, start with a $4$-GDD of type $20^4$ which exists by Lemma~\ref{gp} and apply Construction~\ref{fillin} with $u=0$, filling in one, two or three groups of size $20$ with a $4$-GDD of type $5^4$ and the remaining groups of size $20$ with a $4$-GDD of type $2^{10}$.

For type $2^{36} 5^4$, start with a $4$-GDD of type $18^5$ which exists by Lemma~\ref{gp} and apply Construction~\ref{fillin} with $u=2$, filling in one  group of size $18$ with a $4$-GDD of type $5^4$ and the remaining groups of size $18$ with a $4$-GDD of type $2^{10}$.
\end{proof}
\end{lemma}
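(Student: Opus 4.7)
The plan is to realize all four types by an application of Construction~\ref{fillin} starting from a uniform $4$-GDD of the form $g^p$ supplied by Lemma~\ref{gp}. The underlying principle is that a group of size that is a common multiple of $2$ and $5$, namely $20$, can be replaced in two ways: by a $4$-GDD of type $2^{10}$ or by a $4$-GDD of type $5^4$, both of which exist by Lemma~\ref{gp}. By mixing these two replacements across the groups of a suitably chosen master design, one can realize any combination $2^{10a} 5^{4b}$ with the correct total number of points.

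For the first three types I would take as master design a $4$-GDD of type $20^4$, which exists by Lemma~\ref{gp} since $(g,p)=(20,4)$ meets all the divisibility conditions and is outside the exception list $\{(2,4),(6,4)\}$. Apply Construction~\ref{fillin} with $u=0$, so each group of size $20$ is filled independently by a $4$-GDD on $20$ points. Choosing three groups to be filled with $2^{10}$ and one with $5^4$ yields type $2^{30} 5^4$; two of each yields $2^{20} 5^8$; one filled with $2^{10}$ and three with $5^4$ yields $2^{10} 5^{12}$.

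For the remaining type $2^{36} 5^4$, the block-decomposable pieces must still have $20$ points each, but that $20$ should now arise after augmentation, so I would use a $4$-GDD of type $18^5$ (again existing by Lemma~\ref{gp}) together with $u=2$. Four of the five groups of size $18$, augmented by the common set of $u=2$ extra points, receive a $4$-GDD of type $2^{10}$, regarded as a design of type $\{2^9,2\}$ in which the distinguished group of size $2$ is identified with the common points; together these contribute $2^{36}$. The fifth group of size $18$ together with the same two extra points receives a $4$-GDD of type $5^4$ on $20$ points, contributing $5^4$. Combined, this gives a $4$-GDD of type $2^{36} 5^4$.

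There are no substantive obstacles: every ingredient ($4$-GDDs of types $20^4$, $18^5$, $2^{10}$ and $5^4$) is supplied directly by Lemma~\ref{gp}. The only mild care required is the bookkeeping in the $u=2$ case, namely observing that a $4$-GDD of type $2^{10}$ can legitimately play the role of a design of type $\{2^9, u\}$ with $u=2$, so that Construction~\ref{fillin} applies verbatim.
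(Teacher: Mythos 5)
Your proposal is correct and follows essentially the same route as the paper: the same master designs ($20^4$ with $u=0$ for the first three types, $18^5$ with $u=2$ for $2^{36}5^4$), the same fill-in designs $2^{10}$ and $5^4$ from Lemma~\ref{gp}, and the same application of Construction~\ref{fillin}. Your explicit remark that $2^{10}$ plays the role of a type $\{2^9,2\}$ design in the $u=2$ case is exactly the bookkeeping the paper leaves implicit.
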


\begin{lemma} \label{fillins.from.2pn1}
There exist $4$-GDDs of types $2^{21} 5^4$, $2^{24} 5^4$, $2^{27} 5^4$, $2^{33} 5^4$, $2^{32} 5^5$ and $2^{35} 5^5$.

\begin{proof}
For types $2^{21} 5^4$, $2^{24} 5^4$, $2^{27} 5^4$, $2^{33} 5^4$, $2^{32} 5^5$ and $2^{35} 5^5$, start respectively with $4$-GDDs of types $2^{21} 20^1$, 
$2^{24} 20^1$, $2^{27} 20^1$, $2^{33} 20^1$, $2^{30} 29^1$ and $2^{33} 29^1$ which exist by Lemma~\ref{2pn1}. Now, apply Construction~\ref{fillin} with $u=0$ to these GDDs, filling in the groups of sizes $20$ and $29$ with $4$-GDDs of types $5^4$ and $2^2 5^5$, respectively.
\end{proof}
\end{lemma}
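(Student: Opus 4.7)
The plan is to build each of these GDDs by applying Construction~\ref{fillin} to a known ``$2^t$ plus one absorber'' design, using an absorber whose size can be dissected into exactly the $5$'s (and possibly two extra $2$'s) that are missing from the target type. Lemma~\ref{2pn1} supplies a rich stock of $4$-GDDs of the form $2^p n^1$ to serve as starting designs; Lemma~\ref{gp} supplies the filler of type $5^4$; and the fresh Lemma~\ref{2255} supplies the filler of type $2^2 5^5$, which is what makes the $s=5$ cases accessible at all.

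For the four types with $s=4$, namely $2^t 5^4$ with $t \in \{21,24,27,33\}$, I would begin with a $4$-GDD of type $2^t\, 20^1$. Each of these starting designs meets the hypotheses of Lemma~\ref{2pn1}: $g=2$, $p=t \equiv 0 \pmod{3}$, $n=20 \equiv 2 \pmod{3}$, and $20 \leq 2(t-1)/2 = t-1$ since $t \geq 21$. Applying Construction~\ref{fillin} with $u=0$ to fill the size-$20$ group by a $4$-GDD of type $5^4$ (available by Lemma~\ref{gp}) then yields a $4$-GDD of type $2^t 5^4$.

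For the two remaining types $2^t 5^5$ with $t \in \{32,35\}$, I would instead use an absorber of size $29 = 2\cdot 2 + 5\cdot 5$, so that a single filling step delivers both the five $5$'s and the two extra $2$'s at once. Writing $t = t'+2$ with $t' \in \{30,33\}$, Lemma~\ref{2pn1} gives a $4$-GDD of type $2^{t'} 29^1$, since $t' \equiv 0 \pmod{3}$, $29 \equiv 2 \pmod{3}$, and $29 \leq t'-1$ hold in both cases. I then apply Construction~\ref{fillin} with $u=0$, filling the size-$29$ group by the $4$-GDD of type $2^2 5^5$ from Lemma~\ref{2255}, to obtain a $4$-GDD of type $2^{t'+2} 5^5 = 2^t 5^5$. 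Beyond this, the argument is purely parameter bookkeeping: the only substantial obstacle had already been overcome with the direct construction of a $4$-GDD of type $2^2 5^5$ in Lemma~\ref{2255}, and the remaining work is just checking that each prescribed $(g,p,n)$ lies in the range covered by Lemma~\ref{2pn1}.
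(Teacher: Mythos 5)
Your proposal is correct and follows essentially the same route as the paper: start from the $4$-GDDs of types $2^{21}20^1$, $2^{24}20^1$, $2^{27}20^1$, $2^{33}20^1$, $2^{30}29^1$ and $2^{33}29^1$ given by Lemma~\ref{2pn1}, then apply Construction~\ref{fillin} with $u=0$, filling the group of size $20$ with a $4$-GDD of type $5^4$ and the group of size $29$ with the $4$-GDD of type $2^2 5^5$. Your explicit verification of the hypotheses of Lemma~\ref{2pn1} (including the boundary cases $n=g(p-1)/2$ for $2^{21}20^1$ and $2^{30}29^1$) is a welcome addition but does not change the argument.
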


\begin{lemma} \label{fillins.from.5pn1}
There exist $4$-GDDs of types $2^7 5^9$, $2^{10} 5^9$, $2^9 5^{13}$, $2^{13} 5^{12}$ and $2^3 5^{16}$.

\begin{proof}
For types $2^7 5^9$, $2^{10} 5^9$, $2^9 5^{13}$, $2^{13} 5^{12}$ and $2^3 5^{16}$, start respectively with $4$-GDDs of types $5^9 14^1$, 
$5^9 20^1$, $5^{12} 23^1$, $5^{12} 26^1$ and $5^{12} 26^1$ which exist by Lemma~\ref{2pn1}. Now, apply Construction~\ref{fillin} with $u=0$ to these GDDs, filling in the groups of sizes $14$, $20$, $23$ with $4$-GDDs of type $2^7$, $2^{10}$ and $2^9 5^1$, respectively, and the groups of size $26$ with a $4$-GDD of type $2^{13}$ for type $2^{13} 5^{12}$ and a $4$-GDD of type $2^3 5^4$ for type $2^3 5^{16}$.
\end{proof}
\end{lemma}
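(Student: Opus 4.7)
The plan is to apply the fill-in construction (Construction~\ref{fillin}) to suitable ``master'' $4$-GDDs of type $5^p n^1$ supplied by Lemma~\ref{5pn1}, exactly in the spirit of the preceding filling lemmas. For each of the five target types $2^t 5^s$, I would (i) choose a pair $(p,n)$ so that a $4$-GDD of type $5^p n^1$ exists by Lemma~\ref{5pn1}, and (ii) choose a small $4$-GDD whose points partition the single group of size $n$ and which supplies the required groups of size $2$ (and possibly one extra group of size $5$). Since $u=0$ throughout, we do not need to adjoin an extra group.

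Concretely, I propose the following matchings. For $2^7 5^9$, take the master $5^9 14^1$: this is valid since $p=9\equiv 9\pmod{12}$, $n=14\equiv 2\pmod{6}$, and $14\leq 5(p-1)/2=20$; fill the group of size $14$ with a $4$-GDD of type $2^7$. For $2^{10} 5^9$, use $5^9 20^1$ (same residue check, $n=20\equiv 2\pmod 6$, $20\leq 20$) filled with $2^{10}$. For $2^9 5^{13}$, use $5^{12}23^1$: $p=12\equiv 0\pmod{12}$, $n=23\equiv 2\pmod 3$, $23\leq 5(p-1)/2=27.5$; fill the group of size $23$ with a $4$-GDD of type $2^9 5^1$. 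For $2^{13} 5^{12}$ and $2^3 5^{16}$, take the same master $5^{12} 26^1$ (where $26\equiv 2\pmod 3$ and $26\leq 27.5$) and fill the group of size $26$ with a $4$-GDD of type $2^{13}$ or $2^3 5^4$, respectively.

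Each fill-in ingredient, namely $4$-GDDs of types $2^7$, $2^{10}$, $2^9 5^1$, $2^{13}$ and $2^3 5^4$, appears in Theorem~\ref{gddlt30.complete}, so no new small designs need to be built. The only real work is combinatorial bookkeeping: verifying that each chosen $(p,n)$ sits in the correct residue class of Lemma~\ref{5pn1}, and that the fill-in's group-size sum equals $n$. I do not foresee a genuine obstacle; the potential nuisance is just picking the master $n$ to be simultaneously (a) compatible with the residue restriction imposed by $p\bmod 12$ in Lemma~\ref{5pn1}, and (b) expressible as $2^a 5^b$ with $(a,b)$ matching an already-established small $4$-GDD, but the five choices above discharge all five cases.
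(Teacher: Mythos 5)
Your proposal is correct and is essentially identical to the paper's own proof: the same master designs $5^9 14^1$, $5^9 20^1$, $5^{12} 23^1$ and $5^{12} 26^1$ (used twice), the same fill-in designs $2^7$, $2^{10}$, $2^9 5^1$, $2^{13}$ and $2^3 5^4$, and the same application of Construction~\ref{fillin} with $u=0$. Your only deviation is an improvement: you correctly attribute the existence of the $5^p n^1$ masters to Lemma~\ref{5pn1} and check its residue conditions, whereas the paper's proof cites Lemma~\ref{2pn1}, which covers $g\in\{2,20\}$ and is evidently a typographical slip.
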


It follows from Lemma~\ref{necess2t5s} that for $4$-GDDs of type $2^t 5^s$ we have  $2t+5s \equiv 2 \pmod{3}$. When considering constructions and infinite families of these GDDs with $98$ or more points, we consider the twenty cases $2t+5s \equiv 2, 5, \ldots, 59 \pmod{60}$ separately.

\begin{lemma} \label{seq}
If $v=2t+5s$, $t + s \equiv 1 \pmod{3}$ and $s \equiv 0$ or $1 \pmod{4}$, then a $4$-GDD of type $2^t 5^s$ exists if $98 \leq v \leq 200$ except possibly for $(t,s) \in \{(11,17), (4,21)\}$.

\begin{proof}
If $t\in \{0,1\}$ or $s \in \{0,1\}$, then existence is given by Lemmas~\ref{gp}, \ref{2pn1} and~\ref{5pn1}.
For the remaining values of $t$ and $s$, constructions are given in Table~\ref{tab.98to197}. 
For each pair $(t,s)$, we start with a given input GDD and then  apply Construction~\ref{fillin} using the given value of~$u$. 
We also give the required fill in designs for each construction. 
All input $4$-GDDs have a type of the form $g^p$ or $g^p n^1$  
(in which case they exist by Lemma~\ref{gp}, \ref{gpn1.0mod6}, \ref{2pn1}, \ref{3pn1} or \ref{5pn1}).
All fill in $4$-GDDs have $v \leq 56$ and a type of the form $2^t 5^s$ (in which case they exist by Theorem~\ref{gddlt30.complete}, \ref{gdd50less.directcon}  or \ref{directcon}).
\end{proof}
\end{lemma}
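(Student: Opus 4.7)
The plan is to apply Construction~\ref{fillin} systematically to every feasible pair $(t,s)$ with $98 \leq v \leq 197$. I would first dispose of the boundary pairs where $t \in \{0,1\}$ or $s \in \{0,1\}$: these correspond to GDDs of types $2^t$, $5^s$, $2^t 5^1$ or $2^1 5^s$, whose existence is guaranteed by Lemmas~\ref{gp}, \ref{2pn1} and \ref{5pn1}. This reduces the problem to pairs with $t \geq 2$ and $s \geq 2$.

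For the remaining cases, I would build a table of recipes, one per pair, each listing an input $4$-GDD of type $g^p$ or $g^p n^1$, a non-negative integer $u$, and the required fill-in $4$-GDDs. Input designs will be drawn from Lemmas~\ref{gp}, \ref{gpn1.0mod6}, \ref{2pn1}, \ref{3pn1} and \ref{5pn1}; natural choices include $g^p$ with $g \in \{2,5,6,20\}$ together with designs of the form $g^p n^1$ shaped so that the final group, after adjoining $u$ extra points, has the correct size to receive one of the small fill-ins. The fill-ins themselves will be $4$-GDDs of type $2^{t'} 5^{s'}$ with $v' = 2t' + 5s' \leq 56$, all of which are available from Theorems~\ref{gdd50less.directcon} and \ref{directcon} together with the small-$v'$ cases of Lemmas~\ref{gp}, \ref{2pn1} and \ref{5pn1}. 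To keep the bookkeeping manageable, I would organise the search by the residue of $v \bmod 60$ (there being twenty residue classes compatible with the conditions $v \equiv 2 \pmod 3$ and $s \equiv 0$ or $1 \pmod 4$), and within each class look for a single recipe template that handles a whole arithmetic progression of pairs.

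The main obstacle is coverage. For most pairs several recipes will work, but the constraints tighten sharply when $v$ is close to the lower end of the interval, because then the input GDD is necessarily small and every one of its groups must be filled by a $2^{t'} 5^{s'}$ design with small $v'$. I expect this is exactly where the two excluded pairs $(t,s) = (11,17)$ (with $v = 107$) and $(t,s) = (4,21)$ (with $v = 113$) arise: no available input $g^p$ or $g^p n^1$ GDD, when filled by the authorised $2^{t'} 5^{s'}$ designs with $v' \leq 56$, produces these types, so they must be listed as possible exceptions at this stage. For every other pair in the range, the verification amounts to checking that the cited input GDD exists by the stated lemma, that all fill-ins exist, and that Construction~\ref{fillin} assembles them into a $4$-GDD of type $2^t 5^s$ with the required parameters, which I would present as a single summary table analogous to Table~\ref{tab.98to197}.
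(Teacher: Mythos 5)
Your proposal follows essentially the same route as the paper: boundary pairs with $t\in\{0,1\}$ or $s\in\{0,1\}$ are handled by Lemmas~\ref{gp}, \ref{2pn1} and~\ref{5pn1}, and the remaining pairs are settled by a case table applying Construction~\ref{fillin} to input $4$-GDDs of type $g^p$ or $g^p n^1$ with fill-in $4$-GDDs of type $2^{t'}5^{s'}$ on at most $56$ points, organised by the residue of $v$ modulo $60$ --- which is exactly what Table~\ref{tab.98to197} does, and you correctly locate the two possible exceptions at $v=107$ and $v=113$. The only caveat is that the substance of the proof lies in actually exhibiting and verifying the table of recipes, which your outline anticipates but does not supply.
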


\begin{table}[ht!]
\caption{Constructions for $4$-GDDs of type $2^t 5^s$ with $98 \leq 2s+5t \leq 197$ in Lemma~\ref{seq}.}\label{tab.98to197}
{\noindent\footnotesize
\begin{center}
  \begin{tabular}{|c|l|l|c|l|} \hline
    $v$ & Types & Input 4-GDD type & $u$ & Fill in 4-GDD types \\ \hline
    
     98 & $2^{39} 5^4, 2^{29} 5^8, 2^{19} 5^{12}, 2^{9} 5^{16}$ & $24^4$ & 2 & $2^{13}, 2^3 5^4$ \\ \hline
    
    101 & $2^{38} 5^5, 2^{28} 5^9, 2^{18} 5^{13}, 2^{8} 5^{17}$ & $24^4$ & 5 & $2^{12} 5^1, 2^2 5^5$ \\ \hline
    
    104 & $2^{42} 5^4, 2^{32} 5^8, 2^{22} 5^{12}, 2^{12} 5^{16}$ & $26^4$ & 0 & $2^{13}, 2^3 5^4$ \\ \hline
        & $2^2 5^{20}$ & $5^{15} 29^1$ & 0 & $2^2 5^5$ \\ \hline
    
    107 & $2^{41} 5^5$ & $2^{36} 35^1$ & 0 & $2^5 5^5$ \\ \hline
        & $2^{31} 5^9, 2^{21} 5^{13}$ & $21^4 18^1$ & 5 & $2^{13}, 2^3 5^4, 2^9 5^1$ \\ \hline
        & $ 2^{11}5^{17}$ & Unknown &  &  \\ \hline
    
    110 & $2^{45} 5^4, 2^{35} 5^8, 2^{25} 5^{12}$ & $24^4 12^1$ & 2 & $2^{13}, 2^3 5^4, 2^7$ \\ \hline
        & $2^{15} 5^{16}, 2^5 5^{20}$ & $15^5 30^1$ & 5 & $5^4, 2^{15} 5^1, 2^5 5^5$ \\ \hline
    
    113 & $2^{44} 5^5, 2^{34} 5^9$ & $21^4 27^1$ & 2 & $2^9 5^1, 2^{12} 5^1, 2^2 5^5$ \\ \hline
        & $2^{24} 5^{13}, 2^{14} 5^{17}$ & $21^4 24^1$ & 5 & $2^3 5^4, 2^{12} 5^1, 2^2 5^5$ \\ \hline
        & $2^{4} 5^{21}$ & Unknown &  &  \\ \hline
    
    116 & $2^{48} 5^4, 2^{38} 5^8, \ldots, 2^8 5^{20}$ & $24^4 15^1$ & 5 & $2^{12} 5^1, 2^2 5^5, 5^4$ \\ \hline
    
    119 & $2^{47} 5^{5}, 2^{37} 5^9, \ldots, 2^{17} 5^{17}$ & $24^4 18^1$ & 5 & $2^{12} 5^1, 2^2 5^5, 2^9 5^1$ \\ \hline
        & $5^{21} 2^{7}$ & $5^{21} 14^1$ & 0 & $2^7$ \\ \hline
    
    122 & $2^{51} 5^4, 2^{41} 5^8, \ldots, 2^{11} 5^{20}$ & $24^4 21^1$ & 5 & $2^{12} 5^1, 2^2 5^5, 2^3 5^4$ \\ \hline
    
    125 & $2^{50} 5^{5}, 2^{40} 5^9, \ldots, 2^{10} 5^{21}$ & $24^5$ & 5 & $2^{12} 5^1, 2^2 5^5$ \\ \hline
    
    128 & $2^{54} 5^4, 2^{44} 5^8, \ldots, 2^{14} 5^{20}$ & $24^4 27^1$ & 5 & $2^{12} 5^1, 2^2 5^5, 2^6 5^4$ \\ \hline
        & $2^{4} 5^{24}$ & $27^4 18^1$ & 2 & $2^2 5^5, 5^4$ \\ \hline
    
    131 & $2^{53} 5^{5}, 2^{43} 5^9, \ldots, 2^{13} 5^{21}$ & $24^4 30^1$ & 5 & $2^{12} 5^1, 2^2 5^5, 2^5 5^5$ \\ \hline
        & $2^{3} 5^{25}$ & $15^7 21^1$ & 5 & $5^4, 2^3 5^4$ \\ \hline
    
    134 & $2^{57} 5^4, 2^{47} 5^8, \ldots, 2^{7} 5^{24}$ & $20^6 14^1$ & 0 & $2^{10}$, $5^4$, $2^7$ \\ \hline
    
    137 & $2^{56} 5^{5}, 2^{46} 5^9, \ldots, 2^{16} 5^{21}$ & $24^4 36^1$ & 5 & $2^{12} 5^1, 2^2 5^5, 2^8 5^5$ \\ \hline
        & $2^{6} 5^{25}$ & $15^7 27^1$ & 5 & $5^4, 2^6 5^4$ \\ \hline
    
    140 & $2^{60} 5^4, 2^{50} 5^8, \ldots, 2^{10} 5^{24}$ & $20^7$ & 0 & $2^{10}$, $5^4$ \\ \hline
    
    143 & $2^{59} 5^5, 2^{49} 5^9, \ldots, 2^9 5^{25}$ & $20^6 23^1$ & 0 & $2^{10}$, $5^4$, $2^9 5^1$ \\ \hline
    
    146 & $2^{63} 5^4, 2^{53} 5^8, \ldots, 2^3 5^{28}$ & $20^6 26^1$ & 0 & $2^{10}$, $5^4$, $2^3 5^4$ \\ \hline
    
    149 & $2^{62} 5^5, 2^{52} 5^9, \ldots, 2^2 5^{29}$ & $20^6 29^1$ & 0 & $2^{10}$, $5^4$, $2^2 5^5$ \\ \hline
        
    152 & $2^{66} 5^4, 2^{56} 5^8, \ldots, 2^6 5^{28}$ & $20^6 32^1$ & 0 & $2^{10}$, $5^4$, $2^6 5^4$ \\ \hline
    
    155 & $2^{65} 5^5, 2^{55} 5^9, \ldots, 2^5 5^{29}$ & $20^6 35^1$ & 0 & $2^{10}$, $5^4$, $2^5 5^5$ \\ \hline
        
    158 & $2^{69} 5^4, 2^{59} 5^8, \ldots, 2^9 5^{28}$ & $20^6 38^1$ & 0 & $2^{10}$, $5^4$, $2^9 5^4$ \\ \hline
    
    161 & $2^{68} 5^5, 2^{58} 5^9, \ldots, 2^8 5^{29}$ & $20^6 41^1$ & 0 & $2^{10}$, $5^4$, $2^8 5^5$ \\ \hline
        
    164 & $2^{72} 5^4, 2^{62} 5^8, \ldots, 2^2 5^{32}$ & $20^6 44^1$ & 0 & $2^{10}$, $5^4$, $2^{12} 5^4$, $2^2 5^8$ \\ \hline
    
    167 & $2^{71} 5^5, 2^{61} 5^9, \ldots, 2^{11} 5^{29}$ & $20^6 47^1$ & 0 & $2^{10}$, $5^4$, $2^{11} 5^5$, $2^1 5^9$ \\ \hline
    
    170 & $2^{75} 5^4, 2^{65} 5^8, \ldots, 2^5 5^{32}$ & $20^6 50^1$ & 0 & $2^{10}$, $5^4$, $2^{15} 5^4$, $2^5 5^8$ \\ \hline
        
    173 & $2^{74} 5^5, 2^{64} 5^9, \ldots, 2^{14} 5^{29}$ & $24^5 48^1$ & 5 & $2^{12} 5^1, 2^2 5^5, 2^{14} 5^5, 2^4 5^9$ \\ \hline
        & $2^4 5^{33}$ & $15^8 48^1$ & 5 & $5^4, 2^4 5^9$ \\ \hline
    
    176 & $2^{78} 5^4, 2^{68} 5^8, \ldots, 2^{18} 5^{28}$ & $24^6 27^1$ & 5 & $2^{12} 5^1, 2^2 5^5, 2^6 5^4$ \\ \hline
        & $2^8 5^{32}$ & $15^8 51^1$ & 5 & $5^4, 2^8 5^8$ \\ \hline
        
    179 & $2^{77} 5^5, 2^{67} 5^9, \ldots, 2^{17} 5^{29}$ & $24^6 30^1$ & 5 & $2^{12} 5^1, 2^2 5^5, 2^5 5^5$ \\ \hline
        & $2^7 5^{33}$ & $5^{33} 14^1$ & 0 & $2^7$ \\ \hline
    
    182 & $2^{81} 5^4, 2^{71} 5^8, \ldots, 2^{11} 5^{32}$ & $20^9 2^1$ & 0 & $2^{10}$, $5^4$ \\ \hline
        
    185 & $2^{80} 5^5, 2^{70} 5^9, \ldots, 2^{10} 5^{33}$ & $20^9 5^1$ & 0 & $2^{10}$, $5^4$ \\ \hline
        
    188 & $2^{84} 5^4, 2^{74} 5^8, \ldots, 2^{14} 5^{32}$ & $24^6 39^1$ & 5 & $2^{12} 5^1, 2^2 5^5, 2^{12} 5^4, 2^2 5^8$ \\ \hline
        & $2^4 5^{36}$ & $15^9 48^1$ & 5 & $5^4, 2^4 5^9$ \\ \hline
        
    191 & $2^{83} 5^5, 2^{73} 5^9, \ldots, 2^{13} 5^{33}$ & $24^6 42^1$ & 5 & $2^{12} 5^1, 2^2 5^5, 2^{11} 5^5, 2^1 5^9$ \\ \hline
        & $2^3 5^{37}$ & $15^{11} 21^1$ & 5 & $5^4, 2^3 5^4$ \\ \hline
    
    194 & $2^{87} 5^4, 2^{77} 5^8, \ldots, 2^7 5^{36}$ & $20^9 14^1$ & 0 & $2^{10}$, $5^4$, $2^7$ \\ \hline
        
    197 & $2^{86} 5^5, 2^{76} 5^9, \ldots, 2^{16} 5^{33}$ & $24^6 48^1$ & 5 & $2^{12} 5^1, 2^2 5^5, 2^{14} 5^5, 2^4 5^9$ \\ \hline
        & $2^6 5^{37}$ & $15^{11} 27^1$ & 5 & $5^4, 2^6 5^4$ \\ \hline
        
    200 & $2^{90} 5^4, 2^{80} 5^8, \ldots, 2^{10} 5^{36}$ & $20^{10}$ & 0 & $2^{10}, 5^4$ \\ \hline
  \end{tabular}
\end{center}}
\end{table}

\begin{lemma} \label{seq811}
If $v=2t+5s$, $t + s \equiv 1 \pmod{3}$ and $s \equiv 0$ or $1 \pmod{4}$ where $v \equiv 8 \pmod{60}$ or $v \equiv 11 \pmod{60}$, then a $4$-GDD of type $2^t 5^s$ exists if $248 \leq v \leq 371$.

\begin{proof}
If $t\in \{0,1\}$ or $s \in \{0,1\}$, then existence is given by Lemmas~\ref{gp}, \ref{2pn1} and~\ref{5pn1}.
For the remaining values of $t$ and $s$, constructions are given in Table~\ref{tab.248to371}. 
For each pair $(t,s)$, we start with a given input GDD and then  apply Construction~\ref{fillin} using the given value of~$u$. 
We also give the required fill in designs for each construction. 
All input $4$-GDDs have a type of the form $g^p n^1$ (in which case they exist by Lemma~\ref{gpn1.0mod6} or \ref{3pn1}).
All fill in $4$-GDDs have a type of the form $2^t 5^s$ and $v \leq 53$ (in which case they exist by Theorem~\ref{gddlt30.complete}, \ref{gdd50less.directcon}  or \ref{directcon}) except for types $2^{13} 5^{12}$ and $2^3 5^{16}$ (these exist by Lemma~\ref{fillins.from.5pn1}).
\end{proof}
\end{lemma}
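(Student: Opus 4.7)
The plan is to reuse the strategy of Lemma~\ref{seq} verbatim: first dispose of the trivial extremes in $t$ or $s$, then for each of the six target values $v\in\{248,251,308,311,368,371\}$ (the values $\equiv 8$ or $11\pmod{60}$ in the stated window) assemble a table analogous to Table~\ref{tab.98to197} listing, for every feasible $(t,s)$ with $2t+5s=v$, a master input GDD, a value of $u$, and the list of fill-in designs needed to apply Construction~\ref{fillin}.

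The opening step is quick: when $t\in\{0,1\}$ or $s\in\{0,1\}$, Lemmas~\ref{gp}, \ref{2pn1}, \ref{5pn1} already give the required design. For every remaining pair, I would pick an input GDD of type $g^p n^1$ with $g\equiv 0\pmod 6$ or $g\equiv 3\pmod 6$ and $g\leq 33$, so that Lemma~\ref{gpn1.0mod6} or Lemma~\ref{3pn1} guarantees its existence. The natural choices here are $g\in\{18,24,30\}$, $u\in\{0,5\}$, and $n$ adjusted to hit the correct $v$: for example $30^{p}n^{1}$ with $p\in\{8,10,12\}$ hits the three residues $248,308,368$ after suitable choice of $n$ and $u$, while $24^{p}n^{1}$ or analogous forms hit $251,311,371$. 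Each $g$-group is then partitioned as $2^{a}5^{b}$ (with $2a+5b=g$), filled by a $4$-GDD of type $2^{a}5^{b}u^{1}$, and the last group of size $n+u$ is filled by a small $4$-GDD of type $2^{c}5^{d}$; the latter come either from Theorem~\ref{directcon.automorphisms}, Theorem~\ref{gdd50less.directcon}, Theorem~\ref{directcon} (when $n+u\leq 53$) or, when an intermediate $(t,s)$ forces a larger filler, from Lemma~\ref{fillins.from.5pn1}, whose role here is precisely to supply the awkward $2^{13}5^{12}$ and $2^{3}5^{16}$ that the direct tables do not.

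The combinatorial core of the argument is therefore the same enumeration performed in Lemma~\ref{seq}: for each fixed $v$, vary the partition of $g$ into parts from $\{2,5\}$ to sweep through every admissible $(t,s)$, and check that the corresponding filler appears on the sanctioned list. Because the six values of $v$ in question are congruent to $8$ or $11$ modulo $60$, the feasible pairs lie on a single arithmetic progression in $s$ for each $v$, so a small number of master rows suffice.

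The main obstacle I expect is bookkeeping rather than mathematics: ensuring that every feasible $(t,s)$ is covered by some row of the table without creating a filler design that has not yet been established, and that the ``last-group'' filler corresponds to one of the types $2^{c}5^{d}$ whose existence is in hand. In a couple of places one may have to use two input GDDs for the same $v$ (splitting the range of $s$), exactly as happens at $v=107,113,128,131,137$ in the proof of Lemma~\ref{seq}. Once the table is assembled, each row is then verified by citing Lemma~\ref{gpn1.0mod6} or Lemma~\ref{3pn1} for the input and Theorems~\ref{gdd50less.directcon}, \ref{directcon}, \ref{directcon.automorphisms} or Lemma~\ref{fillins.from.5pn1} for the fillers, and the proof is complete.
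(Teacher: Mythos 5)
Your proposal follows essentially the same route as the paper: dispose of $t\in\{0,1\}$ or $s\in\{0,1\}$ via Lemmas~\ref{gp}, \ref{2pn1} and \ref{5pn1}, then for each of the six values $v\in\{248,251,308,311,368,371\}$ apply Construction~\ref{fillin} to an input $4$-GDD of type $g^p n^1$ (the paper uses $g\in\{15,24,27\}$ rather than your $\{18,24,30\}$, justified by Lemmas~\ref{gpn1.0mod6} and \ref{3pn1}), filling groups with small $4$-GDDs of type $2^a 5^b$ and invoking Lemma~\ref{fillins.from.5pn1} exactly for the oversized fillers $2^{13}5^{12}$ and $2^3 5^{16}$. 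The only difference is that the paper actually exhibits the table (Table~\ref{tab.248to371}) whose construction you defer to bookkeeping, but your outline correctly anticipates all of its essential features, including the need for multiple input GDDs at a single $v$.
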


\begin{table}[hbt!]\small
\caption{Constructions for $4$-GDDs of type $2^t 5^s$ with $248 \leq 2s+5t \leq 371$ in Lemma~\ref{seq811}.}\label{tab.248to371}
{\noindent
\begin{center}
  \begin{tabular}{|c|l|l|c|l|} \hline
    $v$ & Types & Input 4-GDD type & $u$ & Fill in 4-GDD types \\ \hline
        
    248 & $2^{114} 5^4, 2^{104} 5^8, \ldots, 2^{24} 5^{40}$ & $24^9 27^1$ & 5 & $2^{12} 5^1, 2^2 5^5, 2^6 5^4$  \\ \hline
        & $2^{14} 5^{44}, 2^{4} 5^{48}$ & $15^{13} 48^1$ & 5 & $5^4, 2^{14} 5^5, 2^4 5^9$ \\ \hline
        
    251 & $2^{113} 5^5, 2^{103} 5^9, \ldots, 2^{23} 5^{41}$ & $24^9 30^1$ & 5 & $2^{12} 5^1, 2^2 5^5, 2^5 5^5$ \\ \hline
        & $2^{13} 5^{45}, 2^3 5^{49}$ & $15^{15} 21^1$ & 5 & $5^4, 2^{13}, 2^3 5^4$ \\ \hline
        
    308 & $2^{144} 5^4, 2^{134} 5^8, \ldots, 2^{24} 5^{52}$ & $24^{11} 39^1$ & 5 & $2^{12} 5^1, 2^2 5^5, 2^{12} 5^4, 2^2 5^8$ \\ \hline
        & $2^{14} 5^{56}, 2^4 5^{60}$ & $15^{17} 48^1$ & 5 & $5^4, 2^{14} 5^5, 2^4 5^9$ \\ \hline
        
    311 & $2^{143} 5^5, 2^{133} 5^9, \ldots, 2^{23} 5^{53}$ & $24^{11} 42^1$ & 5 & $2^{12} 5^1, 2^2 5^5, 2^{11} 5^5, 2^1 5^9$ \\ \hline
        & $2^{13} 5^{57}, 2^{3} 5^{61}$ & $15^{15} 81^1$ & 5 & $5^4, 2^{13} 5^{12}, 2^3 5^{16}$ \\ \hline
    
    368 & $2^{174} 5^4, 2^{164} 5^8, \ldots, 2^{34} 5^{60}$ & $24^{14} 27^1$ & 5 & $2^{12} 5^1, 2^2 5^5, 2^6 5^4$ \\ \hline
        & $2^{24} 5^{64}, 2^{14} 5^{68}, 2^{4} 5^{72}$ & $15^{21} 48^1$ & 5 & $5^4, 2^{24} 5^1, 2^{14} 5^5, 2^4 5^9$ \\ \hline
        
    371 & $2^{173} 5^5, 2^{163} 5^9, \ldots, 2^{33} 5^{61}$ & $24^{14} 30^1$ & 5 & $2^{12} 5^1, 2^2 5^5, 2^5 5^5$ \\ \hline
        & $2^{23} 5^{65}$ & $27^{12} 45^1$ & 2 & $2^2 5^5, 2^{11} 5^5$ \\ \hline
        & $2^{13} 5^{69}, 2^{3} 5^{73}$ & $15^{19} 81^1$ & 5 & $5^4, 2^{13} 5^{12}, 2^3 5^{16}$ \\ \hline
  \end{tabular}
\end{center}}
\end{table}

We now give a construction for infinite families consisting of the remaining $4$-GDDs of type $2^t 5^s$ with at least $203$ points in Lemmas~\ref{inf.family.main} and \ref{inf.family.811}.

\begin{lemma} \label{inf.family.main}
Suppose that $t+s \equiv 1 \pmod{3}$, $s \equiv 0$ or $1 \pmod{4}$, $v=2t+5s \geq 203$ and $v \not\equiv 8$ or $11 \pmod{60}$.
Then there exists a $4$-GDD of type $2^t 5^s$.
\end{lemma}

\begin{proof}
If $s \in \{0,1\}$ or $t = 0$, 
then existence is given by Lemmas~\ref{gp} and \ref{5pn1}, 
so we may assume that $s \geq 2$ and $t > 0$.

Suppose that $\ell \in \{2, 5, \ldots, 59\}$, $\ell \neq 8$ or $11$ and $v \equiv \ell \pmod{60}$. 
Then set $m = (v-\ell)/20$ if $\ell > 20$ and $m = (v-\ell-60)/20$ otherwise.
Note that $m \equiv 0 \pmod{3}$.
Since $v \geq 203$, it follows that $m \geq 9$. 

Form a 4-GDD of type $20^m \ell^1$ if $\ell \geq 20$ and type $20^m (\ell + 60)^1$ otherwise. 
These types exist by Lemma~\ref{2pn1}. Now, we apply Construction~\ref{fillin} with $u=0$.  Note that  there are two steps to these constructions.

{\it Step 1}:

If $\ell = 23$, 
then fill in the group of size $\ell$ with a 4-GDD of type $2^9 5^1$
and set $x = (s-1)/4$.

If $\ell \in \{26,32,38\}$, 
then fill in the group of size $\ell$ with a 4-GDD of type $2^3 5^4$, $2^6 5^4$ or $2^9 5^4$ for $\ell=26,32,38$, respectively,
and set $x = (s-4)/4$.

If $\ell \in \{29,35,41\}$, 
then fill in the group of size $\ell$ with a 4-GDD of type $2^2 5^5$, $2^5 5^5$ or $2^8 5^5$ for $\ell=29,35,41$, respectively,
and set $x = (s-5)/4$.

If $\ell \in \{44,50,56\}$ and $s = 4$, 
then fill in the group of size $\ell$ with a 4-GDD of type $2^{12} 5^4$, $2^{15} 5^4$ or $2^{18} 5^4$ for $\ell=44,50,56$, respectively, 
and set $x = 0$. 
If $\ell \in \{44,50,56\}$ and $s \geq 8$, 
then fill in the group of size $\ell$ with a 4-GDD of type $2^2 5^8$, $2^5 5^8$ or $2^8 5^8$ for $\ell=44,50,56$, respectively, 
and set $x = (s-8)/4$.

If $\ell \in \{47,53,59\}$ and $s = 5$, 
then fill in the group of size $\ell$ with a 4-GDD of type $2^{11} 5^5$, $2^{14} 5^5$ or $2^{17} 5^5$ for $\ell=47,53,59$, respectively, and set $x = 0$.
If $\ell \in \{47,53,59\}$ and $s \geq 9$, then fill in the group of size $\ell$ with a 4-GDD of type $2^1 5^9$, $2^4 5^9$ or $2^7 5^9$ for $\ell=47,53,59$, respectively, and set $x = (s-9)/4$.

If $\ell = 5$ and $s = 5$, then fill in the group of size $\ell + 60 = 65$ with a 4-GDD of type $2^{20} 5^5$ and set $x = 0$.
If $\ell = 5$ and $s \geq 9$, then fill in the group of size $\ell + 60 = 65$ with a 4-GDD of type $2^{10} 5^9$ and set $x = (s-9)/4$.

If $\ell \in \{2,14,20\}$ and $s = 4$, then fill in the group of size $\ell + 60$ with a 4-GDD of type $2^{21} 5^4$, $2^{27} 5^4$ or $2^{30} 5^4$ for $\ell=2,14,20$, respectively, and set $x = 0$. 
If $\ell \in \{2,14,20\}$ and $s = 8$, then fill in the group of size $\ell + 60$ with a 4-GDD of type $2^{11} 5^8$, $2^{17} 5^8$ or $2^{20} 5^8$ for $\ell=2,14,20$, respectively, and set $x = 0$. 
If $\ell \in \{2,14,20\}$ and $s \geq 12$, then fill in the group of size $\ell + 60$ with a 4-GDD of type $2^1 5^{12}$, $2^7 5^{12}$ or $2^{10} 5^{12}$ for $\ell=2,14,20$, respectively, and set $x = (s-12)/4$.

If $\ell = 17$ and $s = 5$, then fill in the group of size $\ell + 60$ with a 4-GDD of type $2^{26} 5^5$ and set $x = 0$. 
If $\ell = 17$ and $s = 9$, then fill in the group of size $\ell + 60$ with a 4-GDD of type $2^{16} 5^9$ and set $x = 0$. 
If $\ell = 17$ and $s \geq 13$, then fill in the group of size $\ell + 60$ with a 4-GDD of type $2^6 5^{13}$ and set $x = (s-13)/4$.

{\it Step 2}: 

Finally, fill in $x$ groups of size $20$ with a 4-GDD of type $5^4$ and the remaining $m-x$ groups of size $20$, if any, with a 4-GDD of type $2^{10}$.
\end{proof}

\begin{lemma} \label{inf.family.811}
Suppose that $t+s \equiv 1 \pmod{3}$, $s \equiv 0$ or $1 \pmod{4}$, $v=2t+5s \geq 428$ and $v \equiv 8$ or $11 \pmod{60}$.
Then there exists a $4$-GDD of type $2^t 5^s$.
\end{lemma}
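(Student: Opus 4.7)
The plan is to follow the blueprint of Lemma~\ref{inf.family.main}, invoking Construction~\ref{fillin} with $u=0$, but with a different input GDD tailored to the residues $8$ and $11$ modulo $60$. Specifically, I would use an input $4$-GDD of type $20^m n^1$ in which $n$ itself has residue $8$ or $11$ modulo $60$ but is small enough that a $4$-GDD of type $2^a 5^b$ on $n$ points is guaranteed by Lemma~\ref{seq}. The natural choices are $n=128$ when $v\equiv 8\pmod{60}$ and $n=131$ when $v\equiv 11\pmod{60}$; both lie in the range $[98,197]$ and satisfy $n\equiv 2\pmod 3$, which is what makes Lemma~\ref{2pn1} applicable.

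To set up, I would first dispose of the degenerate cases $s\in\{0,1\}$ or $t\in\{0,1\}$ via Lemmas~\ref{gp}, \ref{2pn1}, and~\ref{5pn1}, noting that the residue constraints rule out several of these. For the remaining cases, write $v=60k+r$ with $r\in\{8,11\}$ and set $m=3k-6$. The hypothesis $v\geq 428$ forces $k\geq 7$ and hence $m\geq 15$, and the conditions of Lemma~\ref{2pn1} are satisfied ($m\equiv 0\pmod 3$, $n=r+120\equiv 2\pmod 3$, $n\leq 10(m-1)$), so the input $4$-GDD of type $20^m n^1$ exists.

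I would then apply Construction~\ref{fillin} with $u=0$, filling $x$ of the $m$ size-$20$ groups with $5^4$ and the remaining $m-x$ with $2^{10}$, and filling the size-$n$ group with a $4$-GDD of type $2^a 5^b$ where $a=t-10(m-x)$ and $b=s-4x$. A quick check gives $2a+5b=n$ and confirms that the output type is $2^t 5^s$. The choice of $x$ requires $x\in[\max(0,m-t/10),\min(m,s/4)]$ to keep $a,b\geq 0$; a short case split shows this interval always contains an integer, since in the interior case its length equals $n/20\geq 6.4$, and in the boundary cases an integer endpoint ($0$ or $m$) lies in it.

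The main obstacle will be to verify the fill-in existence. Since $m\equiv 0\pmod 3$ and $x$ is an integer, one checks $a+b\equiv 1\pmod 3$ and $b\equiv s\pmod 4\in\{0,1\}$, so $(a,b)$ satisfies the necessary conditions of Lemma~\ref{necess2t5s}. Enumerating, for $n=128$ the feasible pairs are $(64,0),(54,4),(44,8),(34,12),(24,16),(14,20),(4,24)$, and for $n=131$ they are $(63,1),(53,5),(43,9),(33,13),(23,17),(13,21),(3,25)$. Each of these exists by Lemma~\ref{seq}, except $(64,0)$, which comes from Lemma~\ref{gp}, and $(63,1)$, which comes from Lemma~\ref{2pn1}. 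Crucially, the known exceptions in Lemma~\ref{seq}, namely $2^{11}5^{17}$ at $v=107$ and $2^{4}5^{21}$ at $v=113$, do not appear on either list, so no obstruction arises and the proof concludes.
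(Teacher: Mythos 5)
Your proof is correct and follows essentially the same route as the paper: the same input designs $20^m\,128^1$ and $20^m\,131^1$ with $m=(v-\ell)/20-6\geq 15$, the same fill-ins ($2^{10}$ and $5^4$ on the size-$20$ groups, and the $v=128$ or $v=131$ designs supplied by Lemma~\ref{seq} on the large group), with your interval argument for $x$ just being a repackaging of the paper's case split on $s$. One small point in your favour: you correctly invoke Lemma~\ref{2pn1} for the existence of the input design, whereas the paper cites Lemma~\ref{gpn1.0mod6}, which does not apply here since $20\not\equiv 0\pmod 6$ and $128,131\not\equiv 0\pmod 3$.
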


\begin{proof}
If $s \in \{0,1\}$ or $t = 0$, then existence is given by Lemmas~\ref{gp} and \ref{5pn1}, so we may assume that $s \geq 2$ and $t > 0$.

Suppose that $\ell \in \{8, 11\}$ where $v \equiv \ell \pmod{60}$. 
Then set $m = (v-\ell)/20 - 6$.
Note that $m \equiv 0 \pmod{3}$.
Since $v \geq 428$, it follows that $m \geq 15$. 
If $\ell = 8$, then form a 4-GDD of type $20^m 128^1$ and if $\ell = 11$, then form a 4-GDD of type $20^m 131^1$. 
These types exist by Lemma~\ref{2pn1}. 
Now, apply Construction~\ref{fillin} with $u=0$.

If $\ell = 8$, then for $s = 4, 8, \ldots, 24$, respectively, fill in the group of size $128$ with a $4$-GDD of type $2^{54} 5^4$, $2^{44} 5^8$, $\ldots$ , $2^4 5^{24}$, respectively. 
Also, fill in $m$ groups of size $20$ with a $4$-GDD of type $2^{10}$.
If $\ell = 8$ and $s\geq 28$, then fill in $(s-24)/4$ groups of size $20$ with a $4$-GDD of type $5^4$, the remaining $m-(s-24)/4$ groups of size $20$ with a $4$-GDD of type $2^{10}$ and the group of size $128$ with a $4$-GDD of type $2^4 5^{24}$.

If $\ell = 11$, then for $s = 5, 9, \ldots, 25$, respectively, fill in the group of size $131$ with a $4$-GDD of type $2^{53} 5^5$, $2^{43} 5^9$, $\ldots$ , $2^3 5^{25}$, respectively. 
Also, fill in $m$ groups of size $20$ with a $4$-GDD of type $2^{10}$.
If $\ell = 11$ and $s\geq 29$, then fill in $(s-25)/4$ groups of size $20$ with a $4$-GDD of type $5^4$. 
Also, fill in the remaining $m-(s-25)/4$ groups of size $20$ with a $4$-GDD of type $2^{10}$ and the group of size $131$ with a $4$-GDD of type $2^3 5^{25}$.
\end{proof}

\section{Summary}

Combining all the lemmas in Section \ref{5s2t.constructions}, we now have the following theorem.

\begin{theorem} \label{summary2t8s}
If $t,s \geq 0$ and $t+s \geq 7$, then a $4$-GDD of type $2^t 5^s$ exists if and only if $t+s \equiv 1 \pmod{3}$ 
and  $s \equiv 0$ or $1 \pmod{4}$, except when $(t,s) = (6,1)$ and possibly when
\begin{itemize}
\item  $v = 68:$ $(t,s) \in \{(4,12)\};$
\item  $v = 71:$ $(t,s) \in \{(23,5), (13,9), (3,13)\};$
\item  $v = 83:$ $(t,s) \in \{(29,5)\};$
\item  $v = 86:$ $(t,s) \in \{(23,8)\};$
\item  $v = 89:$ $(t,s) \in \{(22,9), (12,13), (2,17)\};$
\item  $v = 92:$ $(t,s) \in \{(26,8), (16,12), (6,16)\};$
\item  $v = 95:$ $(t,s) \in \{(25,9), (15,13), (5,17)\};$
\item  $v = 107:$ $(t,s) \in \{(11,17)\};$
\item  $v = 113:$ $(t,s) \in \{(4,21)\}.$
\end{itemize}
\end{theorem}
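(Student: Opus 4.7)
The plan is to assemble Theorem~\ref{summary2t8s} as a direct synthesis of the results established throughout Section~\ref{5s2t.constructions}. For necessity, the congruence conditions $t+s \equiv 1 \pmod{3}$ and $s \equiv 0$ or $1 \pmod{4}$ come from Lemma~\ref{necess2t5s}, while the definite non-existence of $4$-GDDs of types $2^4$ and $2^6 5^1$ is recorded in Theorem~\ref{gddlt30.complete}. For sufficiency, I would first dispose of the boundary cases $s \in \{0,1\}$ or $t \in \{0,1\}$ by invoking Lemmas~\ref{gp}, \ref{2pn1}, and~\ref{5pn1}, and then partition the remaining feasible pairs (with $s,t \geq 2$) according to the value $v = 2t+5s$.

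For $v \leq 50$, existence follows from the updated Kreher--Stinson table in Theorem~\ref{gddlt30.complete} (which now incorporates the type $2^2 5^5$ supplied by Lemma~\ref{2255}) together with the direct constructions collected in Theorem~\ref{gdd50less.directcon}. For $53 \leq v \leq 95$, I would combine the direct constructions of Theorem~\ref{directcon} with the fill-in constructions of Lemmas~\ref{fillin.from.direct}, \ref{fillins.from.gp}, \ref{fillins.from.2pn1}, and~\ref{fillins.from.5pn1}; reading off what each lemma produces, the feasible pairs left uncovered are precisely those listed at $v \in \{68,71,83,86,89,92,95\}$ in the theorem statement. For $98 \leq v \leq 197$, Lemma~\ref{seq} handles every feasible pair except $(t,s) \in \{(11,17),(4,21)\}$, which yields the final two exceptions at $v \in \{107,113\}$.

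For $v \geq 200$, I would split on $v \bmod 60$. When $v \not\equiv 8, 11 \pmod{60}$, Lemma~\ref{inf.family.main} completes the job directly. When $v \equiv 8$ or $11 \pmod{60}$, Lemma~\ref{seq811} covers the six values $v \in \{248, 251, 308, 311, 368, 371\}$ and Lemma~\ref{inf.family.811} covers $v \geq 428$. The relevant intervals are chosen so that the residue classes $v \equiv 8, 11 \pmod{60}$ strictly between $200$ and $248$ are empty (the previous hits being $v = 188, 191$ which lie in the scope of Lemma~\ref{seq}), and similarly the gap between $371$ and $428$ contains no such $v$, so no residual values slip through.

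The proof is essentially bookkeeping: the delicate step is verifying, for each $v$-window, that the set of pairs produced by the cited lemmas is exactly the full set of feasible pairs minus the listed exceptions. I expect the main obstacle to be the low-$v$ windows $v \leq 95$, where the exception lists arise as small residual gaps in the available direct and fill-in constructions rather than from a single systematic failure, so each pair $(t,s)$ with $s,t \geq 2$ satisfying the congruences must be checked individually against the outputs of Theorems~\ref{gdd50less.directcon} and~\ref{directcon} and the fill-in Lemmas~\ref{fillin.from.direct}--\ref{fillins.from.5pn1}. Once this case check is tabulated, the windows $v \geq 98$ follow uniformly and only the two sporadic exceptions $(t,s) \in \{(11,17),(4,21)\}$ at $v = 107, 113$ remain.
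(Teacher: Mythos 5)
Your proposal is correct and takes essentially the same approach as the paper, whose entire stated proof is the single sentence that the theorem follows by combining all the lemmas of Section~\ref{5s2t.constructions}; your write-up simply makes the bookkeeping explicit (boundary cases via Lemmas~\ref{gp}, \ref{2pn1}, \ref{5pn1}; $v\le 95$ via the direct and fill-in results; $98\le v\le 197$ via Lemma~\ref{seq}; and $v\ge 200$ split by residue mod $60$ via Lemmas~\ref{inf.family.main}, \ref{seq811} and \ref{inf.family.811}). Your check that the residue classes $8,11 \pmod{60}$ leave no gaps between $197$ and $248$ or between $371$ and $428$ is exactly the verification the paper leaves implicit.
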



\section{Acknowledgements} 
This research used the computational cluster Katana supported by Research Technology Services at UNSW Sydney.
The authors are grateful to Michael Denes for his computing support.
The third author acknowledges the support from an Australian Government Research Training Program Scholarship and from the School of  Mathematics and Statistics, UNSW Sydney.

\section*{ORCID}
R. J. R. Abel:     https://orcid.org/0000-0002-3632-9612\\
T. Britz:          https://orcid.org/0000-0003-4891-3055\\
Y. A. Bunjamin:    https://orcid.org/0000-0001-6849-2986\\
D. Combe:          https://orcid.org/0000-0002-1055-3894

\clearpage
\section*{Appendix}

\begin{table}[ht]
\small
\caption{$4$-GDD of type $2^{14}5^{5}$} \label{21455}
\medskip
Points: $a_i,b_i,c_i,d_i,e_i,f_i,p_i,q_i,r_i,s_i,t_i,u_i,v_i$ for $i\in\Z_4$; \; $\infty$.\\
Groups: $\{a_i,b_i,c_i,d_i,e_i\}$ for $i\in\Z_4$; \, $\{f_i: i \in\Z_4\} \cup \{\infty\}$;\\\hspace*{12.5mm}
        $\{p_i,q_i\}$ for $i\in\Z_4$; \; $\{w_i, w_{i+2}\}$ for $w \in \{r,s,t,u,v\}$ and $i \in \{0,1\}$.\\
Develop the following blocks  $(\bmod\ 4)$. The first five blocks in the first column generate one block each and the sixth generates two blocks.
{\footnotesize
\[
  \begin{array}{|l|l|l|l|l|l|}\hline
    \{a_0,a_1,a_2,a_3\} & \{a_0,b_1,c_2,q_0\}    & \{a_0,f_2,q_1,v_3\} & \{b_0,f_1,p_2,u_0\}    & \{c_0,p_1,u_1,u_2\} & \{e_0,f_3,q_1,u_3\}\\\hline
    \{b_0,b_1,b_2,b_3\} & \{a_0,b_2,f_0,s_0\}    & \{a_0,f_1,r_3,s_2\} & \{b_0,f_0,t_0,v_0\}    & \{c_0,q_0,r_0,u_0\} & \{e_0,p_3,u_2,v_1\}\\\hline
    \{c_0,c_1,c_2,c_3\} & \{a_0,b_3,p_0,r_0\}    & \{a_0,q_3,r_1,u_0\} & \{b_0,f_3,t_2,v_1\}    & \{c_0,q_3,s_1,t_2\} & \{e_0,r_2,s_2,v_3\}\\\hline
    \{d_0,d_1,d_2,d_3\} & \{a_0,c_1,d_2,\infty\} & \{a_0,s_1,t_3,u_2\} & \{b_0,p_0,r_2,\infty\} & \{c_0,q_1,t_3,v_1\} & \{e_0,t_3,u_0,\infty\}\\\hline
    \{e_0,e_1,e_2,e_3\} & \{a_0,c_3,t_0,v_1\}    & \{b_0,c_3,e_1,t_3\} & \{b_0,r_0,u_2,v_2\}    & \{d_0,e_2,f_0,q_0\} & \{f_0,q_1,r_0,t_2\}\\\hline
    \{p_0,p_2,q_1,q_3\} & \{a_0,d_1,p_1,t_2\}    & \{b_0,c_2,r_3,s_1\} & \{c_0,d_2,f_3,p_3\}    & \{d_0,e_3,f_3,t_0\} & \{p_0,q_2,s_2,v_1\}\\\hline
                        & \{a_0,d_3,q_2,u_1\}    & \{b_0,d_3,q_0,s_3\} & \{c_0,d_3,s_2,v_0\}    & \{d_0,e_1,r_1,r_2\} & \{p_0,r_3,t_0,t_3\}\\\hline
                        & \{a_0,e_1,p_3,t_1\}    & \{b_0,d_2,t_1,u_3\} & \{c_0,e_1,f_2,s_0\}    & \{d_0,f_2,s_1,u_3\} & \{q_0,s_1,v_1,\infty\}\\\hline
                        & \{a_0,e_3,r_2,s_3\}    & \{b_0,d_1,u_1,v_3\} & \{c_0,e_3,p_0,v_3\}    & \{d_0,p_2,p_3,s_2\} & \{s_0,s_1,t_0,u_0\}\\\hline
                        & \{a_0,e_2,u_3,v_0\}    & \{b_0,e_3,p_3,s_0\} & \{c_0,f_0,p_2,r_3\}    & \{d_0,q_2,r_3,t_2\} & \\\hline
                        & \{a_0,f_3,p_2,v_2\}    & \{b_0,e_2,q_1,q_2\} & \{c_0,f_1,r_2,u_3\}    & \{d_0,r_0,v_0,v_3\} & \\\hline
  \end{array}
\]}
\end{table}

\begin{table}[ht!]
\small
\caption{$4$-GDD of type $2^{4}5^{9}$} \label{2459}
\medskip
Points: $a_i,b_i,c_i,d_i,e_i,f_i,p_i,q_i,r_i,s_i,t_i,u_i,v_i$ for $i\in\Z_4$; \; $\infty$.\\
Groups: $\{a_i, b_i, c_i, d_i, e_i\}$ for $i\in\Z_4$; \, $\{ f_i:  i \in\Z_4\} \cup \{\infty \}$; \, $\{r_i, s_i, t_i, u_i, v_i\}$ for $i\in\Z_4$;\\\hspace*{12.5mm} 
  $\{w_i, w_{i+2}\}$ for $w \in \{p,q\}$ and $i \in \{0,1\}$.\\
Develop the following blocks  $(\bmod\ 4)$. The ten blocks in the first column generate one block each.
{\footnotesize
\[
  \begin{array}{|l|l|l|l|l|l|}\hline
    \{a_0,a_1,a_2,a_3\} & \{a_0,b_1,f_0,q_0\}    & \{a_0,f_2,p_3,v_2\} & \{b_0,f_2,p_1,s_0\}    & \{c_0,p_1,r_2,t_1\} & \{e_0,f_3,s_2,u_0\}\\\hline
    \{b_0,b_1,b_2,b_3\} & \{a_0,b_2,p_0,v_0\}    & \{a_0,f_1,q_3,u_3\} & \{b_0,f_1,t_2,v_3\}    & \{c_0,q_2,r_3,v_0\} & \{e_0,p_0,s_0,t_1\}\\\hline
    \{c_0,c_1,c_2,c_3\} & \{a_0,b_3,r_0,s_1\}    & \{a_0,f_3,r_3,t_1\} & \{b_0,f_0,u_3,v_1\}    & \{c_0,q_1,s_2,t_0\} & \{e_0,q_2,r_0,v_2\}\\\hline
    \{d_0,d_1,d_2,d_3\} & \{a_0,c_1,d_2,\infty\} & \{a_0,q_2,r_2,v_1\} & \{b_0,p_0,p_3,u_0\}    & \{d_0,e_3,f_1,p_1\} & \{e_0,t_2,v_1,\infty\}\\\hline
    \{e_0,e_1,e_2,e_3\} & \{a_0,c_2,p_1,t_0\}    & \{b_0,c_2,d_1,v_0\} & \{b_0,q_0,s_3,\infty\} & \{d_0,e_2,f_3,r_0\} & \{f_0,p_2,q_3,t_0\}\\\hline
    \{r_0,r_1,r_2,r_3\} & \{a_0,c_3,s_0,u_1\}    & \{b_0,c_3,e_1,q_2\} & \{c_0,d_2,f_2,s_3\}    & \{d_0,e_1,p_2,v_0\} & \{p_0,q_3,r_2,s_1\}\\\hline
    \{s_0,s_1,s_2,s_3\} & \{a_0,d_3,p_2,r_1\}    & \{b_0,c_1,t_0,u_2\} & \{c_0,e_1,f_1,r_0\}    & \{d_0,f_2,q_3,t_1\} & \{p_0,r_0,u_3,\infty\}\\\hline
    \{t_0,t_1,t_2,t_3\} & \{a_0,d_1,t_3,u_2\}    & \{b_0,d_2,r_3,s_1\} & \{c_0,e_3,p_2,u_0\}    & \{d_0,p_0,q_2,s_2\} & \\\hline
    \{u_0,u_1,u_2,u_3\} & \{a_0,e_1,q_1,u_0\}    & \{b_0,d_3,r_2,t_3\} & \{c_0,f_3,r_1,u_3\}    & \{d_0,q_0,q_1,u_2\} & \\\hline
    \{v_0,v_1,v_2,v_3\} & \{a_0,e_2,s_3,t_2\}    & \{b_0,e_2,q_1,t_1\} & \{c_0,f_0,s_0,v_3\}    & \{d_0,s_0,u_3,v_2\} & \\\hline
                        & \{a_0,e_3,s_2,v_3\}    & \{b_0,e_3,r_0,u_1\} & \{c_0,p_0,q_0,v_1\}    & \{d_0,t_3,u_0,v_1\} & \\\hline
  \end{array}
\]}
\end{table}

\begin{table}[ht!]
\small
\caption{$4$-GDD of type $2^{18}5^{4}$} \label{21854}
\medskip
Points: $a_i,b_i,c_i,d_i,e_i,p_i,q_i,r_i,s_i,t_i,u_i,v_i,y_i,z_i$ for $i\in\Z_4$;\\
Groups: $\{a_i, b_i, c_i, d_i, e_i\}$ for $i\in\Z_4$; \, $\{p_i, q_i\}$ for $i\in\Z_4$; $\{w_i, w_{i+2}\}$ for $w \in \{r,s,t,u,v,y,z\}$ and $i \in \{0,1\}$.\\
Develop the following blocks  $(\bmod\ 4)$.
The first five blocks in the first column generate one block each and the sixth generates two blocks.
{\footnotesize
\[
  \begin{array}{|l|l|l|l|l|l|}\hline
    \{a_0,a_1,a_2,a_3\}  & \{a_0,b_1,c_2,v_0\} & \{a_0,q_2,s_1,z_1\} & \{b_0,p_1,s_2,v_0\} & \{c_0,q_3,y_1,y_2\} & \{e_0,p_1,q_3,y_0\}\\\hline
    \{b_0,b_1,b_2,b_3\} & \{a_0,b_2,p_0,r_0\} & \{a_0,q_1,u_2,v_3\} & \{b_0,q_0,u_3,z_1\} & \{c_0,r_2,s_1,u_2\} & \{e_0,p_3,v_3,y_1\}\\\hline
    \{c_0,c_1,c_2,c_3\}   & \{a_0,b_3,v_1,y_0\} & \{a_0,q_3,u_3,y_3\} & \{b_0,r_0,v_1,y_2\} & \{c_0,r_0,y_0,z_2\} & \{e_0,q_1,r_2,u_3\}\\\hline
    \{d_0,d_1,d_2,d_3\} & \{a_0,c_1,d_2,s_0\} & \{a_0,s_2,t_3,z_3\} & \{b_0,t_2,t_3,u_0\} & \{c_0,s_2,u_1,v_3\} & \{e_0,s_0,u_0,v_0\}\\\hline
    \{e_0,e_1,e_2,e_3\} & \{a_0,c_3,q_0,t_0\} & \{b_0,c_3,e_2,s_3\} & \{b_0,u_2,y_3,z_3\} & \{d_0,e_3,q_3,z_3\} & \{e_0,t_2,v_2,y_2\}\\\hline
    \{p_0,p_2,q_1,q_3\} & \{a_0,d_1,p_1,t_1\} & \{b_0,c_2,r_1,z_2\} & \{c_0,d_3,p_2,y_3\} & \{d_0,e_1,r_0,r_1\} & \{p_0,t_2,u_2,v_1\}\\\hline
                                         & \{a_0,d_3,r_1,u_0\} & \{b_0,d_2,q_2,q_3\} & \{c_0,d_2,q_0,v_0\} & \{d_0,e_2,s_1,u_3\} & \{p_0,v_2,z_0,z_1\}\\\hline
                                         & \{a_0,e_1,r_2,v_2\} & \{b_0,d_1,s_0,y_0\} & \{c_0,e_1,p_1,z_3\} & \{d_0,p_1,p_2,s_0\} & \{q_0,r_0,s_1,t_3\}\\\hline
                                         & \{a_0,e_2,t_2,y_1\} & \{b_0,d_3,t_1,z_0\} & \{c_0,e_2,t_3,z_1\} & \{d_0,r_3,t_3,v_1\} & \{q_0,r_2,s_2,v_1\}\\\hline
                                         & \{a_0,e_3,u_1,z_0\} & \{b_0,e_1,p_3,t_0\} & \{c_0,p_3,r_1,t_2\} & \{d_0,t_1,u_0,y_2\} & \{q_0,r_3,t_1,z_2\}\\\hline
                                         & \{a_0,p_2,r_3,y_2\} & \{b_0,e_3,q_1,s_1\} & \{c_0,p_0,u_0,u_3\} & \{d_0,u_2,y_1,z_2\} & \{r_0,s_2,y_1,z_0\}\\\hline
                                         & \{a_0,p_3,s_3,z_2\} & \{b_0,p_0,r_3,u_1\} & \{c_0,q_2,t_0,v_1\} & \{d_0,v_0,v_3,z_0\} & \{s_0,s_1,t_0,y_2\}\\\hline
  \end{array}
\]}
\end{table}

\begin{table}[ht]
\small
\caption{$4$-GDD of type $2^{8}5^{8}$} \label{2858}
\medskip
Points: $a_i,b_i,c_i,d_i,e_i,p_i,q_i$ for $i\in\Z_8$.\\
Groups: $\{a_i, b_i, c_i, d_i, e_i\}$ for $i\in\Z_8$; \, $\{w_i, w_{i+4}\}$ for $w \in \{p,q\}$ and $i\in\{0,1,2,3\}$.\\
Develop the following blocks  $(\bmod\ 8)$.
The  blocks in the first column generate two blocks each.
{\footnotesize
\[
  \begin{array}{|l|l|l|l|l|l|}\hline
    \{a_0,a_2,a_4,a_6\}  & \{a_0,a_1,e_2,q_0\} & \{a_0,b_1,p_0,q_1\} & \{a_0,e_6,p_6,p_7\} & \{b_0,d_5,p_4,p_6\} & \{c_0,d_3,p_5,q_3\}\\\hline
    \{b_0,b_2,b_4,b_6\} & \{a_0,a_3,c_4,d_1\}  & \{a_0,b_5,p_2,q_4\} & \{a_0,e_5,p_3,q_2\} & \{b_0,d_2,e_7,q_3\} & \{c_0,d_2,q_5,q_7\}\\\hline
    \{c_0,c_2,c_4,c_6\}   & \{a_0,b_4,b_7,d_2\} & \{a_0,c_6,d_5,e_3\} & \{b_0,b_1,c_6,q_2\} & \{b_0,e_1,p_0,q_4\} & \{c_0,e_2,e_3,q_2\}\\\hline
    \{d_0,d_2,d_4,d_6\} & \{a_0,b_2,c_3,e_4\}  & \{a_0,c_7,d_3,q_5\} & \{b_0,c_3,d_1,d_4\} & \{b_0,e_4,q_5,q_6\} & \{d_0,d_1,e_2,p_5\}\\\hline
    \{e_0,e_2,e_4,e_6\}  & \{a_0,b_3,c_5,p_5\} & \{a_0,d_4,e_7,p_4\} & \{b_0,c_7,e_5,p_1\} & \{c_0,c_1,p_4,q_1\} & \{d_0,p_3,p_6,q_6\}\\\hline
                                          & \{a_0,b_6,c_2,p_1\} & \{a_0,d_7,q_3,q_6\} & \{b_0,d_7,e_3,e_6\} & \{c_0,c_3,e_7,p_1\} & \\\hline                      
  \end{array}
\]}
\end{table}

\begin{table}[ht]
\small
\caption{$4$-GDD of type $2^{17}5^{5}$} \label{21755}
\medskip
Points: $a_i,b_i,c_i,p_i,q_i$ for $i\in\Z_{10}$; \, $y_i$ for $i\in\Z_5$; \, $\infty_1,\infty_2,\infty_3,\infty_4$.\\
Groups: $\{a_i, b_i\}$ for $i \in\Z_{10}$; \, $\{c_i, c_{i+5} \}$ for $i\in\{0,1,2,3,4\}$; \, $\{y_i: i \in \Z_5\}$;\\\phantom{Groups:}\hspace*{.33mm}
        $\{\infty_1, \infty_2\}$; \, $\{\infty_3, \infty_4\}$; \,
        $\{w_i, w_{i+2}, w_{i+4}, w_{i+6}, w_{i+8} \}$ for $w \in \{p,q\}$ and $i \in \{0,1\}$.\\
First, form a 4-GDD of type $2^7$ with groups $\{c_i, c_{i+5}\}$ for $i \in \{0,1,2,3,4\}$,
$\{\infty_1, \infty_2\}$ and $\{\infty_3, \infty_4\}$.
Next, develop the following blocks  $(\bmod\ 10)$.
The four blocks in the first column generate five blocks each.
{\footnotesize
\[
  \begin{array}{|l|l|l|l|l|l|}\hline
    \{a_0,a_5,y_0,\infty_4\}      & \{a_0,a_1,b_2,c_0\} & \{a_0,b_7,p_1,\infty_1\} & \{a_0,c_7,q_7,q_8\}      & \{b_0,b_4,c_6,p_2\} & \{b_0,c_7,q_2,y_4\}\\\hline
    \{b_0,b_5,y_0,\infty_3\} & \{a_0,a_2,b_5,y_1\} & \{a_0,b_8,q_2,\infty_2\} & \{a_0,p_3,p_4,q_4\}      & \{b_0,b_3,p_0,q_9\} & \{b_0,p_9,q_3,\infty_4\}\\\hline
    \{p_0,p_5,y_0,\infty_2\} & \{a_0,a_3,c_1,p_0\} & \{a_0,b_9,c_3,y_2\}      & \{a_0,p_9,q_5,y_3\}      & \{b_0,c_3,p_3,q_0\} & \{c_0,p_4,q_2,y_1\}\\\hline
    \{q_0,q_5,y_0,\infty_1\} & \{a_0,a_4,c_6,q_0\} & \{a_0,c_4,p_2,p_5\}      & \{a_0,p_6,q_9,\infty_3\} & \{b_0,c_9,p_1,y_2\} & \{c_0,p_7,q_9,y_0\}\\\hline
                             & \{a_0,b_4,b_6,q_1\} & \{a_0,c_5,p_8,q_3\}      & \{b_0,b_1,c_1,p_6\}      & \{b_0,c_5,q_1,q_8\} & \\\hline
  \end{array}
\]}
\end{table}

\begin{table}[ht]
\small
\caption{$4$-GDD of type $2^{11}5^{8}$} \label{21158}
\medskip
Points: $a_i,b_i,c_i,d_i,p_i,q_i$ for $i\in\Z_{10}$; \, $y_i$ for $i\in\Z_2$.\\
Groups: $\{w_i, w_{i+5}\}$ for $w \in \{p,q\}$ and $i \in \{0,1,2,3,4\}$; \, $\{y_i: i \in \Z_2\}$;\\\phantom{Groups:}\hspace*{.33mm}
        $\{w_i, w_{i+2}, w_{i+4}, w_{i+6}, w_{i+8} \}$ for $w \in \{a,b,c,d\}$ and $i \in \{0,1\}$.\\
Develop the following blocks  $(\bmod\ 10)$. The two blocks in the first column generate five blocks each.
{\footnotesize
\[
  \begin{array}{|l|l|l|l|l|l|}\hline
    \{a_0,a_5,b_0,b_5\}     & \{a_0,a_1,c_0,d_1\} & \{a_0,b_7,d_4,p_4\} & \{a_0,c_8,q_6,q_9\} & \{b_0,c_3,d_9,q_0\} & \{b_0,d_5,q_5,q_7\}\\\hline
    \{c_0,c_5,d_0,d_5\}      & \{a_0,a_3,d_2,q_0\} & \{a_0,b_8,p_2,q_1\} & \{a_0,d_5,d_8,p_9\} & \{b_0,c_6,p_0,p_9\} & \{c_0,d_8,p_1,q_2\}\\\hline
                                             & \{a_0,b_1,b_2,d_3\} & \{a_0,b_9,q_3,y_0\} & \{a_0,p_3,p_6,q_8\} & \{b_0,c_5,p_2,q_8\} & \{c_0,d_3,q_0,q_6\}\\\hline
                                             & \{a_0,b_3,c_1,c_2\} & \{a_0,c_5,d_7,p_5\} & \{a_0,p_7,q_4,q_5\} & \{b_0,c_2,q_6,y_0\} & \{c_0,p_5,p_9,q_9\}\\\hline
                                             & \{a_0,b_4,c_4,p_0\} & \{a_0,c_7,d_6,q_2\} & \{b_0,b_3,d_3,q_2\} & \{b_0,d_4,p_1,p_3\} & \{d_0,d_1,p_6,y_0\}\\\hline
                                             & \{a_0,b_6,c_3,p_1\} & \{a_0,c_6,p_8,y_1\} & \{b_0,c_1,c_4,d_8\} & \{b_0,d_6,p_8,q_1\} & \\\hline
  \end{array}
\]}
\end{table}

\begin{table}[ht]
\small
\caption{$4$-GDD of type $2^{20}5^{5}$} \label{22055}
\medskip
Points: $a_i,b_i,c_i,d_i,p_i,q_i$ for $i\in\Z_{10}$; \, $y_i$ for $i\in\Z_5$.\\
Groups: $\{w_i, w_{i+5}\}$ for $w \in \{a,b,c,d\}$ and $i \in \{0,1,2,3,4\}$; \, $\{y_i: i \in \Z_5\}$;\\\phantom{Groups:}\hspace*{.33mm}
        $\{w_i, w_{i+2}, w_{i+4}, w_{i+6}, w_{i+8} \}$ for $w \in \{p,q\}$ and $i \in \{0,1\}$.\\
Develop the following blocks  $(\bmod\ 10)$. The block in the first column generates five blocks.
{\footnotesize
\[
  \begin{array}{|l|l|l|l|l|l|}\hline
    \{p_0,p_5,q_0,q_5\} & \{a_0,a_1,b_0,c_0\} & \{a_0,b_8,d_3,q_6\} & \{a_0,d_8,p_4,y_2\} & \{b_0,c_1,p_2,q_9\} & \{c_0,c_3,d_7,y_0\}\\\hline
                                         & \{a_0,a_2,b_3,y_0\} & \{a_0,c_2,c_3,d_2\} & \{a_0,d_9,q_4,q_5\} & \{b_0,c_5,p_7,q_1\} & \{c_0,c_2,p_6,q_2\}\\\hline
                                         & \{a_0,a_3,c_1,q_0\} & \{a_0,c_4,d_5,d_7\} & \{a_0,p_8,q_9,y_4\} & \{b_0,d_1,d_8,p_5\} & \{c_0,d_5,p_3,q_5\}\\\hline
                                         & \{a_0,a_4,d_0,q_2\} & \{a_0,c_5,p_2,p_3\} & \{b_0,b_3,c_7,q_0\} & \{b_0,d_3,d_7,q_4\} & \{c_0,q_1,q_4,y_3\}\\\hline
                                         & \{a_0,b_2,b_4,p_0\} & \{a_0,c_6,p_5,q_3\} & \{b_0,b_4,c_3,y_4\} & \{b_0,d_9,p_0,q_3\} & \{d_0,d_1,q_9,y_2\}\\\hline
                                         & \{a_0,b_5,b_6,q_1\} & \{a_0,c_7,p_7,y_1\} & \{b_0,c_2,c_8,d_0\} & \{b_0,d_6,p_9,y_1\} & \\\hline
                                        & \{a_0,b_7,d_1,p_1\} & \{a_0,d_4,p_6,p_9\} & \{b_0,c_6,d_2,p_1\} & \{b_0,p_3,q_2,y_3\} & \\\hline
  \end{array}
\]}
\end{table}

\begin{table}[ht!]
\small
\caption{$4$-GDD of type $2^{14}5^{8}$} \label{21458}
\medskip
Points: $a_i,b_i,c_i,d_i,e_i,p_i,q_i,r_i,s_i$ for $i\in\Z_7$; \, $\infty_1,\infty_2,\infty_3,\infty_4,\infty_5$.\\
Groups: $\{a_i,b_i,c_i,d_i,e_i\}$, $\{p_i,q_i\}$ and $\{r_i,s_i\}$ for $i\in\Z_7$; \, $\{\infty_1,\infty_2,\infty_3,\infty_4,\infty_5\}$.\\
Develop the following blocks  $(\bmod\ 7)$.
{\footnotesize
\[
  \begin{array}{|l|l|l|l|l|l|}\hline
    \{a_0,a_1,b_2,q_0\}      & \{a_0,c_5,p_1,\infty_5\} & \{b_0,b_1,c_2,p_0\}      & \{b_0,e_3,p_3,r_6\}      & \{c_0,d_2,p_2,s_5\}      & \{e_0,e_3,p_2,p_4\}\\\hline
    \{a_0,a_2,c_1,s_0\}      & \{a_0,d_3,e_4,q_3\}      & \{b_0,b_2,d_1,s_0\}      & \{b_0,e_5,q_6,s_6\}      & \{c_0,e_2,p_0,\infty_1\} & \{e_0,e_1,s_4,s_6\}\\\hline
    \{a_0,a_3,d_1,s_2\}      & \{a_0,d_4,p_2,q_5\}      & \{b_0,b_3,p_1,r_1\}      & \{b_0,e_6,s_1,\infty_2\} & \{c_0,p_4,q_2,\infty_3\} & \{e_0,r_5,s_0,\infty_5\}\\\hline
    \{a_0,b_3,c_2,r_0\}      & \{a_0,d_6,p_3,\infty_2\} & \{b_0,c_3,d_4,q_0\}      & \{b_0,q_4,s_3,\infty_4\} & \{c_0,q_1,r_0,\infty_2\} & \{p_0,p_3,q_2,s_4\}\\\hline
    \{a_0,b_4,e_1,\infty_3\} & \{a_0,e_3,e_5,r_2\}      & \{b_0,c_4,e_1,q_3\}      & \{c_0,c_3,e_6,q_3\}      & \{d_0,d_3,e_2,r_4\}      & \{p_0,q_1,q_4,s_2\}\\\hline
    \{a_0,b_5,p_0,r_1\}      & \{a_0,e_6,q_2,q_4\}      & \{b_0,c_5,r_0,r_2\}      & \{c_0,c_2,p_1,r_3\}      & \{d_0,d_2,p_3,r_2\}      & \{q_0,q_1,r_2,r_5\}\\\hline
    \{a_0,b_6,s_1,\infty_1\}   & \{a_0,p_4,p_5,s_3\}      & \{b_0,d_5,e_2,q_2\}      & \{c_0,c_1,s_1,s_4\}      & \{d_0,e_3,p_6,r_3\}      & \{q_0,r_0,s_3,s_4\}\\\hline
    \{a_0,c_3,d_2,q_1\}      & \{a_0,p_6,r_4,\infty_4\} & \{b_0,d_2,p_4,s_4\}      & \{c_0,d_4,d_5,s_2\}      & \{d_0,q_2,r_5,\infty_1\} & \\\hline
    \{a_0,c_4,e_2,r_3\}      & \{a_0,r_5,r_6,s_4\}      & \{b_0,d_3,q_1,\infty_5\} & \{c_0,d_3,e_1,\infty_4\} & \{d_0,r_6,s_0,\infty_3\} & \\\hline
  \end{array}
\]}
\end{table}

\begin{table}[ht!]
\small
\caption{$4$-GDD of type $2^{26}5^{5}$} \label{22655}\vspace{-1mm}
\medskip
Points: $a_i,b_i,c_i,d_i,p_i,q_i,r_i$ for $i\in\Z_{10}$; $y_i$ for $i\in \Z_{2}$; \, $z_i$ for $i\in\Z_{5}$.\\
	Groups: $\{w_i, w_{i+2}, w_{i+4}, w_{i+6}, w_{i+8}\}$ for $w \in \{a,b\}$  and $i\in \{0,1\}$;\\\phantom{Groups:}\hspace*{.33mm} 
$\{w_i, w_{i+5}\}$ for $w \in \{c,d,p,q,r\}$ and $i\in \{0,1,2,3,4\}$; \,   $\{y_i: i \in  \Z_{2} \}$; \, $\{z_i: i \in \Z_5\}$. \\
Develop the following blocks  $(\bmod\ 10)$.  The block in the first column generates five blocks.
{\footnotesize
\[ 
  \begin{array}{|l|l|l|l|l|l|}\hline
\{a_0,a_5,b_0,b_5\}      & \{a_0,a_3,c_6,q_2\}      & \{a_0,c_2,d_5,q_1\}      & \{a_0,q_6,y_1,z_4\}      & \{b_0,p_4,q_4,z_4\}      & \{c_0,p_0,q_7,r_5\}\\\hline
                         & \{a_0,a_9,d_1,z_2\}      & \{a_0,c_9,p_0,r_3\}      & \{b_0,b_1,c_0,q_0\}      & \{b_0,p_7,r_8,y_1\}      & \{c_0,q_2,q_4,r_7\}\\\hline
                         & \{a_0,b_4,c_5,d_6\}      & \{a_0,d_4,p_1,y_0\}      & \{b_0,b_3,d_6,p_5\}      & \{b_0,q_3,r_7,z_2\}      & \{c_0,q_1,r_0,z_3\}\\\hline
                         & \{a_0,b_3,c_7,p_3\}      & \{a_0,d_7,q_4,r_1\}      & \{b_0,c_8,d_7,q_6\}      & \{b_0,r_4,r_6,z_0\}      & \{d_0,d_4,p_0,q_2\}\\\hline
                         & \{a_0,b_6,c_1,p_5\}      & \{a_0,d_8,q_3,r_5\}      & \{b_0,c_6,p_8,r_5\}      & \{c_0,c_4,d_2,d_4\}      & \{d_0,d_1,p_4,z_0\}\\\hline
                         & \{a_0,b_1,c_8,r_0\}      & \{a_0,d_9,r_4,r_7\}      & \{b_0,d_8,p_3,z_1\}      & \{c_0,c_1,d_7,p_9\}      & \{d_0,d_3,r_3,r_9\}\\\hline
                         & \{a_0,b_2,c_4,r_2\}      & \{a_0,p_2,p_6,q_5\}      & \{b_0,d_4,q_5,q_8\}      & \{c_0,c_2,q_5,z_1\}      & \{p_0,p_3,q_4,q_8\}\\\hline
                         & \{a_0,b_7,c_0,z_0\}      & \{a_0,p_7,p_8,r_6\}      & \{b_0,d_9,q_2,y_0\}      & \{c_0,c_3,r_6,y_0\}      & \\\hline
                         & \{a_0,b_8,d_3,p_4\}      & \{a_0,p_9,r_9,z_1\}      & \{b_0,d_0,r_1,r_2\}      & \{c_0,d_5,p_3,z_2\}      & \\\hline
                         & \{a_0,b_9,d_0,q_0\}      & \{a_0,q_7,q_8,r_8\}      & \{b_0,p_1,q_7,r_3\}      & \{c_0,p_5,p_7,r_1\}      & \\\hline
  \end{array}
\]}
\end{table}

\begin{table}[ht!]
\small
\caption{$4$-GDD of type $2^{16}5^{9}$} \label{21659}\vspace{-1mm}
\medskip
Points: $a_i,b_i,c_i,d_i,p_i,q_i,r_i$ for $i\in\Z_{10}$; $y_i$ for $i\in \Z_{2}$; \, $z_i$ for $i\in\Z_{5}$.\\
	Groups: $\{w_i, w_{i+2}, w_{i+4}, w_{i+6}, w_{i+8}\}$ for $w \in \{a,b,c,d\}$  and $i\in \{0,1\}$;\\\phantom{Groups:}\hspace*{.33mm} 
$\{w_i, w_{i+5}\}$ for $w \in \{p,q,r\}$ and $i\in \{0,1,2,3,4\}$; \,   $\{y_i: i \in  \Z_{2} \}$; \, $\{z_i: i \in \Z_5\}$. \\
Develop the following blocks  $(\bmod\ 10)$.  The two blocks in the first column generate five blocks each.
{\footnotesize
\[ 
  \begin{array}{|l|l|l|l|l|l|}\hline
\{a_0,a_5,b_0,b_5\}      & \{a_0,a_7,c_4,q_0\}      & \{a_0,c_8,d_7,q_5\}      & \{a_0,q_9,y_1,z_4\}      & \{b_0,p_4,q_1,z_3\}      & \{c_0,p_6,q_0,r_2\}\\\hline
\{c_0,c_5,d_0,d_5\}      & \{a_0,a_1,d_4,z_3\}      & \{a_0,c_9,p_3,y_0\}      & \{b_0,b_1,c_5,q_0\}      & \{b_0,p_3,r_5,y_1\}      & \{c_0,q_1,q_3,r_9\}\\\hline
                         & \{a_0,b_2,c_3,d_0\}      & \{a_0,d_2,p_4,r_1\}      & \{b_0,b_3,d_0,p_5\}      & \{b_0,q_5,r_9,z_1\}      & \{c_0,q_4,r_7,z_3\}\\\hline
                         & \{a_0,b_9,c_1,p_0\}      & \{a_0,d_5,q_2,r_3\}      & \{b_0,c_6,d_4,q_8\}      & \{b_0,r_2,r_6,z_0\}      & \{d_0,d_1,p_8,q_1\}\\\hline
                         & \{a_0,b_1,c_0,p_1\}      & \{a_0,d_8,q_4,r_9\}      & \{b_0,c_0,p_7,r_1\}      & \{c_0,c_3,d_4,p_8\}      & \{d_0,d_3,p_3,z_3\}\\\hline
                         & \{a_0,b_3,c_6,r_0\}      & \{a_0,d_9,r_4,r_5\}      & \{b_0,d_2,p_8,z_4\}      & \{c_0,c_1,q_9,z_2\}      & \{p_0,p_4,q_6,q_9\}\\\hline
                         & \{a_0,b_4,c_2,r_2\}      & \{a_0,p_5,p_6,q_6\}      & \{b_0,d_3,q_2,q_6\}      & \{c_0,d_3,p_2,z_4\}      & \\\hline
                         & \{a_0,b_8,c_5,z_0\}      & \{a_0,p_7,p_9,r_8\}      & \{b_0,d_1,q_3,y_0\}      & \{c_0,d_6,r_6,r_8\}      & \\\hline
                         & \{a_0,b_6,d_1,p_2\}      & \{a_0,p_8,r_6,z_1\}      & \{b_0,d_6,r_0,r_3\}      & \{c_0,d_2,r_5,y_0\}      & \\\hline
                         & \{a_0,b_7,d_6,q_1\}      & \{a_0,q_7,q_8,r_7\}      & \{b_0,p_9,q_7,r_4\}      & \{c_0,p_0,p_3,r_3\}      & \\\hline
  \end{array}
\]}
\end{table}

\begin{table}[ht!]
\small
\caption{$4$-GDD of type $2^{6}5^{13}$} \label{26513}\vspace{-1mm}
\medskip
Points: $a_i,b_i,c_i,d_i,p_i,q_i,r_i$ for $i\in\Z_{10}$; \, $y_i$ for $i\in \Z_{2}$; \, $z_i$ for $i\in\Z_{5}$.\\
	Groups: $\{w_i, w_{i+2}, w_{i+4}, w_{i+6}, w_{i+8}\}$ for $w \in \{a,b,c,d,p,q\}$  and $i\in \{0,1\}$;\\\phantom{Groups:}\hspace*{.33mm} 
$\{r_i, r_{i+5}\}$ for  $i\in \{0,1,2,3,4\}$; \,   $\{y_i: i \in  \Z_{2} \}$; \, $\{z_i: i \in \Z_5\}$. \\
Develop the following blocks  $(\bmod\ 10)$.  The three blocks in the first column generate five blocks each.
{\footnotesize
\[ 
  \begin{array}{|l|l|l|l|l|l|}\hline
\{a_0,a_5,b_0,b_5\}      & \{a_0,a_3,c_6,q_0\}      & \{a_0,c_2,d_5,q_2\}      & \{a_0,q_8,y_1,z_4\}      & \{b_0,p_1,q_7,z_0\}      & \{c_0,p_3,q_6,r_5\}\\\hline
\{c_0,c_5,d_0,d_5\}      & \{a_0,a_9,d_3,z_2\}      & \{a_0,c_9,p_0,y_0\}      & \{b_0,b_1,c_2,q_5\}      & \{b_0,p_6,r_0,y_1\}      & \{c_0,p_7,q_9,r_0\}\\\hline
\{p_0,p_5,q_0,q_5\}      & \{a_0,b_8,c_5,d_7\}      & \{a_0,d_2,p_5,r_1\}      & \{b_0,b_3,d_8,p_5\}      & \{b_0,q_8,r_4,z_3\}      & \{c_0,q_2,r_9,z_1\}\\\hline
                         & \{a_0,b_4,c_4,p_2\}      & \{a_0,d_6,q_4,r_2\}      & \{b_0,c_8,d_4,q_3\}      & \{b_0,r_1,r_7,z_2\}      & \{d_0,d_3,p_5,q_4\}\\\hline
                         & \{a_0,b_3,c_7,p_6\}      & \{a_0,d_8,q_3,r_5\}      & \{b_0,c_3,p_9,r_9\}      & \{c_0,c_1,d_9,p_5\}      & \{d_0,d_1,p_0,z_2\}\\\hline
                         & \{a_0,b_1,c_0,r_3\}      & \{a_0,d_9,r_4,r_7\}      & \{b_0,d_6,p_0,z_1\}      & \{c_0,c_3,q_1,z_3\}      & \\\hline
                         & \{a_0,b_2,c_8,r_0\}      & \{a_0,p_4,p_7,q_5\}      & \{b_0,d_7,q_0,q_9\}      & \{c_0,d_4,p_2,z_2\}      & \\\hline
                         & \{a_0,b_6,c_1,z_0\}      & \{a_0,p_8,p_9,r_6\}      & \{b_0,d_0,q_6,y_0\}      & \{c_0,d_7,r_7,r_8\}      & \\\hline
                         & \{a_0,b_7,d_0,p_1\}      & \{a_0,p_3,r_8,z_1\}      & \{b_0,d_1,r_3,r_5\}      & \{c_0,d_1,r_4,y_0\}      & \\\hline
                         & \{a_0,b_9,d_1,q_1\}      & \{a_0,q_6,q_9,r_9\}      & \{b_0,p_7,q_1,r_6\}      & \{c_0,p_0,q_7,r_1\}      & \\\hline
  \end{array}
\]}
\end{table}

\begin{table}[ht!]
\small
\caption{$4$-GDD of type $2^{19}5^{9}$} \label{21959}
\medskip
Points: $a_i,b_i,c_i,d_i,e_i,p_i,q_i,r_i,s_i$ for $i\in\Z_9$; \, $\infty_1,\infty_2$.\\
Groups: $\{a_i,b_i,c_i,d_i,e_i\}$, $\{p_i,q_i\}$ and $\{r_i,s_i\}$ for $i\in\Z_9$; \, $\{\infty_1,\infty_2\}$.\\
Develop the following blocks  $(\bmod\ 9)$.
{\footnotesize
\[
  \begin{array}{|l|l|l|l|l|l|}\hline
    \{a_0,a_1,b_2,s_0\} & \{a_0,c_8,s_5,\infty_2\} & \{b_0,b_1,c_2,r_5\} & \{b_0,d_6,p_7,\infty_2\} & \{c_0,e_3,e_6,r_2\} & \{e_0,e_4,s_5,s_8\}\\\hline
    \{a_0,a_2,b_5,s_4\} & \{a_0,d_4,d_5,p_2\} & \{b_0,b_2,d_1,r_1\} & \{b_0,e_4,p_5,s_1\} & \{c_0,e_4,p_0,r_4\} & \{e_0,p_3,p_4,q_8\}\\\hline
    \{a_0,a_3,c_1,p_0\} & \{a_0,d_7,e_3,r_6\} & \{b_0,b_3,e_1,p_0\} & \{b_0,e_5,q_8,\infty_1\} & \{c_0,p_3,q_1,q_6\} & \{e_0,q_2,r_2,\infty_2\}\\\hline
    \{a_0,a_4,d_1,q_0\} & \{a_0,d_8,p_3,\infty_1\} & \{b_0,b_4,q_0,s_0\} & \{b_0,p_2,p_8,s_3\} & \{c_0,q_3,s_7,s_8\} & \{p_0,p_2,r_1,s_8\}\\\hline
    \{a_0,b_4,c_2,q_1\} & \{a_0,e_4,e_5,q_2\} & \{b_0,c_3,c_5,p_1\} & \{b_0,q_4,r_0,r_6\} & \{c_0,r_5,s_2,\infty_1\} & \{p_0,p_4,r_7,s_2\}\\\hline
    \{a_0,b_6,d_2,r_0\} & \{a_0,e_6,e_8,s_6\} & \{b_0,c_4,e_2,q_2\} & \{c_0,c_4,d_3,p_6\} & \{d_0,d_4,e_6,q_1\} & \{q_0,r_3,s_6,s_8\}\\\hline
    \{a_0,b_7,e_1,p_1\} & \{a_0,e_7,p_4,q_3\} & \{b_0,c_8,q_1,q_3\} & \{c_0,c_3,e_2,p_4\} & \{d_0,e_7,p_5,r_2\} & \\\hline
    \{a_0,b_8,r_1,s_3\} & \{a_0,p_5,q_6,q_7\} & \{b_0,c_6,r_7,s_6\} & \{c_0,d_1,d_4,s_4\} & \{d_0,e_8,r_5,r_6\} & \\\hline
    \{a_0,c_3,c_4,r_2\} & \{a_0,p_7,q_4,s_7\} & \{b_0,d_2,d_4,p_4\} & \{c_0,d_6,e_1,s_3\} & \{d_0,p_8,r_1,s_2\} & \\\hline
    \{a_0,c_5,d_3,s_1\} & \{a_0,p_8,r_4,r_8\} & \{b_0,d_3,e_6,q_7\} & \{c_0,d_5,q_5,r_0\} & \{d_0,q_2,q_5,r_3\} & \\\hline
    \{a_0,c_6,e_2,r_3\} & \{a_0,q_8,r_5,r_7\} & \{b_0,d_7,e_8,s_2\} & \{c_0,d_2,q_0,s_1\} & \{d_0,q_3,s_1,s_5\} & \\\hline
  \end{array}
\]}
\end{table}

\begin{table}[ht]\small
  \caption{ $4$-GDD of type $5^8 14^1 20^1$}   \label{58141201}
\medskip
  Points: $a_i,  b_i, c_i$  for  $i \in \Z_{20}$; $\infty_1, \infty_2, \ldots, \infty_{14}$.\\
  Groups: $\{w_i, w_{i+4}, w_{i+8}, w_{i+12}, w_{i+16}\}$ for $w \in \{a,b\}$ and $i \in \{0,1,2,3\}$;  $\{c_i:  i \in \Z_{20}\}$; \\\phantom{Groups:}\hspace*{.33mm}
  $\{\infty_1, \infty_2, \ldots, \infty_{14}\}$.
 \newline Develop the following blocks  $(\bmod\ 20)$. The  block in the first column generates ten blocks.
{\noindent
{\footnotesize
\[
  \begin{array}{|l|l|l|l|l|l|}\hline
    \{a_0, a_{10}, b_0, b_{10}\}   &  \{a_0, a_7, b_9, b_{18}\}        &  \{b_4, b_{17},b_{18},c_0\}      &  \{a_{7}, b_{12}, c_0,\infty_4\} &  \{a_{15}, b_{7}, c_0,\infty_8\}  & \{a_{17},b_{13},c_0,\infty_{12}\} \\\hline
                                   & \{a_0, a_5, a_{11}, c_0\}      & \{a_{18},b_{19},c_0,\infty_1\}  &  \{a_{9}, b_{15},c_0,\infty_5\}  &  \{a_{12}, b_{5},c_0,\infty_9\}   & \{a_{14},b_{11},c_0,\infty_{13}\} \\\hline
                                   & \{a_3, a_4, a_6, c_0\}     & \{a_{13},b_{16},c_0,\infty_2\}  &  \{a_{1}, b_{ 8},c_0,\infty_6\}  & \{a_{8}, b_{ 2},c_0,\infty_{10}\} & \{a_{10},b_{ 9},c_0,\infty_{14}\} \\\hline
                                   & \{b_1, b_3, b_6, c_0\}         &  \{a_{16},b_{0},c_0,\infty_3\}  &   \{a_{2},b_{10},c_0,\infty_7\}  & \{a_{19},b_{14},c_0,\infty_{11}\} &   \\\hline
 \end{array}
\]}}
\end{table} 

\vfill
\end{document}